\documentclass[11pt]{amsart}
 
\usepackage{bbm, amsmath, amsfonts, amscd, latexsym, amsthm, amssymb, graphicx,hyperref,mathrsfs, comment}
\usepackage{abstract}
\usepackage{mathtools}
\usepackage{caption}
\usepackage[dvipsnames]{xcolor}

\usepackage{tikz-cd}
\tikzset{
  symbol/.style={
    draw=none,
    every to/.append style={
      edge node={node [sloped, allow upside down, auto=false]{$#1$}}}
  }
}

\usepackage[usenames,dvipsnames]{pstricks}

\newtheorem{theor}{\hspace{1cm}{\sc Theorem}}[section]

\newtheorem{sledst}[theor]{\hspace{1cm}{\sc Corollary}}
\newtheorem{lemma}[theor]{\hspace{1cm}{\sc Lemma}}

\newtheorem*{utver*}{\hspace{1cm}{\sc Proposition}}
\theoremstyle{definition}

\newtheorem{defin}[theor]{\hspace{1cm}{\sc Definition}}
\newtheorem{exa}[theor]{\hspace{1cm}{\sc Example}}
\newtheorem{observ}[theor]{\hspace{1cm}{\sc Observation}}
\newtheorem{rem}[theor]{\hspace{1cm}{\sc Remark}}

\newtheorem*{conven}{\hspace{1cm}{\sc Convention}}

\newcommand{\Vol}{\mathop{\rm Vol}\nolimits}

\newcommand{\id}{\mathop{\rm id}\nolimits}
\newcommand{\im}{\mathop{\rm im}\nolimits}

\newcommand{\tN}{\widetilde N}

\newcommand{\coker}{\mathop{\rm coker}\nolimits}

\addtolength{\oddsidemargin}{-0.6in}
\addtolength{\evensidemargin}{-0.6in}
\addtolength{\textwidth}{1.2in}

\renewcommand{\emph}[1]{{\it {\color{NavyBlue} #1}}}
\newcommand{\h}[1]{H_1(#1)}

\def\sym{\mathfrak{S}}
\def\Hom{\text{Hom}}
\def\tT{\widetilde{T}}
\def\tG{\widetilde{G}}

\def\tA{\tilde{A}}
\def\tU{\widetilde{U}}
\def\tV{\widetilde{V}}
\def\R{\mathbb R}

\def\Z{\mathbb Z}

\def\C{\mathbb C}
\def\CC{({\mathbb C}^\star)}
\def\cC{{\mathbb C}^\star}

\def\CP{\mathbb C\mathbb P}

\emergencystretch=7pt
\begin{document}
\begin{center}{\Large \sc Sparse polynomial equations and other enumerative problems whose Galois groups are wreath products}

\vspace{3ex}

{\sc A. Esterov\footnote{Supported by the Russian Science Foundation grant, project 16-11-10316.\\ {\it 2020 Mathematics Subject Classification} 14E20, 14H05, 14M25, 14N10, 20B20, 52B20, 58K10 \\
{\it Keywords:} enumerative geometry, topological Galois theory, Galois covering, monodromy, Newton polytope}, L. Lang}
\end{center}

\begin{abstract} 
We introduce a new technique to prove connectivity of subsets of covering spaces (so called inductive connectivity), and apply it to Galois theory of problems of enumerative geometry.

As a model example, consider the problem of permuting the roots of a complex polynomial $f(x) = c_0 + c_1 x^{d_1} + \ldots + c_k x^{d_k}$ by varying its coefficients. If the GCD of the exponents is $d$, then the polynomial admits the change of variable $y=x^d$, and its roots split into necklaces of length $d$. At best we can expect to permute these necklaces, i.e. the Galois group of $f$ equals the wreath product of the symmetric group over $d_k/d$ elements and $\Z/d\Z$.

The aim of this paper is to prove this equality and study its multidimensional generalization: we show that the Galois group of a general system of polynomial equations equals the expected wreath product for a large class of systems, but in general this expected equality fails, making the problem of describing such Galois groups unexpectedly rich.

\end{abstract}

\section{Introduction and main results}

\subsection{Sparse polynomial equations.}
Consider the set $A$ of integer numbers $$0=a_0<a_1<\ldots<a_k=a$$
and the space $\C^A$ of all complex polynomials of the form
\begin{equation}\label{eq:univariate}
    f(x) = c_0 + c_1 x^{a_1} + \ldots + c_k x^{a_k}.
\end{equation}{}

As $f$ travels along a loop in $\C^A$, avoiding the discriminant $D=\{$polynomials wih less than $a$ roots$\}$, its roots undergo a permutation. The group of all such permutations $G_A$ is called the \emph{monodromy group}, or the \emph{Galois group} of the polynomial. 

\begin{rem} The latter name comes from the fact that $G_A$ equals the Galois group of the corresponding extension of the field $\C(c_0,\ldots,c_k)$. In this way the definition of the group $G_A$ extends from polynomials over complex numbers to an arbitrary field. Over fields of positive characteristic, this group has been thoroughly studied especially for trinomials ($k=2$), but the complete answer is not known so far: \cite{tri1}, \cite{cohen80}, \cite{tri2}, \cite{tri3}. A well known related project, originating from \cite{abh1} and culminating in \cite{abh2}, is Abhyankar's identification of many interesting groups as Galois groups of trinomials whose coefficients are powers of the same parameter. In what follows, we restrict ourselves to the complex setting. \end{rem}

The following fact is widely known, see e.g. \cite{cohen80} or \cite{E18} 
for the algebraic and the topological proof respectively.
\begin{observ}\label{thdim1red}
If $a_1,\ldots,a_k$ are mutually prime, then $G_A=\sym_a$.
\end{observ}
The assumption here cannot be relaxed: if $d:={\rm GCD}(a_1,\ldots,a_k)$ is greater than 1, then the group $G_A$ is strictly smaller than $\sym_a$, because in this case the equation $f(x)=0$ has the form $\tilde f(x^d)=0$. In particular, every root $y$ of $\tilde f$ gives rise to the necklace of $d$ roots of $f$ of the form $\sqrt[d]{y}$, and at best we can expect to permute these $a/d$ (oriented) necklaces. 
The group of permutations of a disjoint union of necklaces is the simplest example of a wreath product.

\begin{defin}\label{defwrpr} Let $S$ be a finite set, $H$ and $G$ be two groups and $\varphi : G \rightarrow \sym(S)$ an action of $G$ on $S$. The \emph{wreath product} $H \wr_\varphi G$ is 
the semidirect product of $H^S$ and $G$ with respect to the action of $G$ on $H^d$ given by $g\cdot f= f\circ \varphi(g)$ for any element $f:S\rightarrow H$ of $H^S$.

\end{defin}
\begin{rem}
We will  mostly consider the case when $\varphi$ is injective, that is $G$ is a subgroup of the group of permutation $\sym(S)$ on $S$. In such case, we will simply denote the wreath product by $H\wr G$. 
This group can be seen as the group of all permutations $\sigma$ of the set $H\times S$, satisfying the following properties:

1) $\sigma$ can be included into the commutative diagram

$$    \begin{tikzcd}
H\times S \arrow{r}{\sigma} \arrow{d} & H\times S \arrow{d} \\
S  \arrow{r} & S 
\end{tikzcd}$$

where the vertical arrows are the standard projection, and the bottom arrow belongs to $G$.

2) The restriction $\sigma:H\times\{i\}\to H\times\{j\}$ for every $i$ is the multiplication by an element of $H$.

In particular, if $H$ is cyclic of order $d$, then $H\wr\sym_a$ is the group of permutations of $a$ oriented necklaces of length $d$.
\end{rem}

\begin{theor}\label{thdim1nonred}
For $d:={\rm GCD}(a_1,\ldots,a_k)$, we have $G_A=(\Z/d\Z)\wr\sym_{a/d}$, i.e. the monodromy group includes all permutations of the roots of the equation $\eqref{eq:univariate}$ that preserve the necklace structure.
\end{theor}

\begin{rem}
It would be interesting to give this theorem an algebraic proof and to understand to what extent it survives the positive characteristic.
\end{rem}

We shall deduce this theorem as a special case of a certain new general fact about the monodromy of enumerative problems: it is a consequence of Theorem \ref{mainth} and Lemma \ref{ldim1nonred} below. After that, we discuss the generalization of this fact to systems of several equations.

\subsection{Structure of the paper.} The paper contains three interrelated results.

(1) The aforementioned Theorem \ref{mainth} on the monodromy of enumerative problems is presented in the Introduction and proved in Section 2.

(2) Its proof is based on a criterion for connectivity of the inverse image of a connected set under a covering map. This criterion is presented in the Intorduction  (Theorem \ref{topolth0}) and proved in Section 2.

(3) Finally, we generalize Theorem \ref{thdim1nonred} to systems of several sparse polynomial equations. A light two-dimensional version (Theorem \ref{thnonred0}) is presented in the Introduction, the full version is presented and deduced from Theorem \ref{mainth} in Section 3.

In contrast to the univariate case, there exist systems of several equations whose Galois group is not equal to the expected wreath product, see Example \ref{ex:smaller}.

\subsection{Enumerative problems.}
The general enumerative problem consists of a pair of smooth algebraic varieties $T$ and $C$ with $C$ connected, together with an algebraic set $U\subset T\times C$ such that the projection $c:U\to C$ is generically finite. The sets $T$ and $C$ are respectively referred to as the \emph{ambient space} and the \emph{space of conditions} of the enumerative problem. 

The set $U$ is called the \emph{solution space}, and the points of the fiber $c^{-1}(f)$ (as well as their images under the projection $t:U\to T$) are called the \emph{solutions} of the enumerative problem for the condition $f\in C$.

For instance, the projection of a complex plane curve $U$ to the horizontal axis $C$ along the vertical axis $T$ is the simplest example of an enumerative problem:
\begin{center}
\includegraphics[width=4cm]{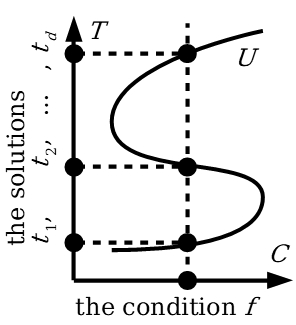}
\end{center}

Here are some other important enumerative problems.
\begin{exa}\label{exaenum} 1) The setting of the first subsection can be regarded as an enumerative problem with $T=\C,\; C=\C^A$, and $U$ defined as the set of all pairs $(x,f)$ satisfying the equation $f(x)=0$.

2) (Pl\"ucker). Let $T$ be the projective plane, $C$ be the space of degree $d$ homogeneous polynomials on $T$, and $U$ be the set of pairs $(x,f)$ such that the plane algebraic curve $f=0$ has a flex at $x$. 

3) (Schubert). Let $T$ be the space of lines in $\CP^n$, $C$ be the space of collections $(V_{1},\ldots,V_{k})$, where $V_i$ is a linear   subspace of $\CP^n$ of codimension $a_i$, and $U$ be the set of collections $(l,V_{1},\ldots,V_{k})$ such that the line $l$ intersects every $V_i$. This is an enumerative problem for $\sum_i (a_i-1)=2n-2$.

\end{exa}

The first step in the study of an enumerative problem is to compute the number of solutions for a generic condition $f\in C$. This number is called the \emph{degree} of the enumerative problem. For the problems in the preceding example, this question was addressed by the aforementioned authors. 

The next natural step is to study the Galois group of the enumerative problem. More specifically, the projection $c:U\to C$ is a covering over a sufficiently small 
Zariski open set $C_\circ\subset C$, i.e. $c^{-1}(C_\circ)\to C_\circ$ is a covering. 
Denote its total space $c^{-1}(C_\circ)$ by $U_\circ\subset U$.
\begin{defin} 
The 
monodromy group of the covering $U_\circ\to C_\circ$ is called the \emph{monodromy group} or the \emph{Galois group} of the enumerative problem $U\to C$. 
\end{defin}

\begin{conven}
The Galois group of the enumerative problem $U\to C$ does not depend on the choice of $C_\circ$. Passing from the enumerative problem $U\to C$ to $U_\circ\to C_\circ$, we can and will always assume throughout the paper that every enumerative problem $U\to C$ is a covering. 

Under this assumption, the variety $U$ is smooth. Therefore, the irreducible components of $U$ are exactly its connected components, so we shall refer to them simply as \emph{components}.
\end{conven}

For the last two problems in Example \ref{exaenum}, the study of the Galois group was initiated in the foundational papers \cite{Harris} and \cite{Vakil} respectively.

\subsection{Wreath enumerative problems.}
Consider an enumerative problem  $U\subset T\times C$ and a covering $\phi:\tilde T\to T$. 
\begin{defin} The preimage $\tU$ of the solution space $U$ under the covering map $(\phi,\id):\tT\times C\to T\times C$ defines the enumerative problem $\tU\to C$ that we shall call the \emph{$\phi$-wreath} over $U$.
\end{defin}
\begin{exa}
Let $\phi:\CC\to\CC$ be given by $\phi(x)=x^d$, then the $\phi$-wreath over the enumerative problem of Example \ref{exaenum}.1
$$T=\CC,\; C=\C^A,\; U=\{(x,f)\,|\, f(x)=0\}$$
is the enumerative problem 
$$\tilde T=\CC,\; C=\C^{\tilde A},\; \tilde U=\{(\tilde x,\tilde f)\,|\, \tilde f(\tilde x)=0\},$$
where $\tilde A=d\cdot A=\{0,da_1,\ldots,da_k\}$.
\end{exa}
Our primary goal in this paper is to study the monodromy group of wreath enumerative problems, motivated by the following observation.
\begin{observ}
It is a widely known empirical phenomenon that Galois groups of interesting enumerative problems (including the ones mentioned in Example \ref{exaenum}, see for instance \cite{schsot}, \cite{Tyom14}, \cite{L19} and \cite{E18}) are either imprimitive or ``trivial'' (i.e. symmetric or alternating).  
On the other hand, if the Galois group of an enumerative problem $U$ is imprimitive, then the covering $U\to C$ decomposes into a non-trivial composition of covering maps, which is close to being a wreath enumerative problem over a simpler one.

Thus, it seems that the study of non-trivial Galois groups of enumerative problems to a large extent reduces to the case of wreath problems. The subsequent theorems estimate and compute Galois groups of $\phi$-wreath problems for Galois coverings $\phi$.
\end{observ}
\begin{observ} \label{mainth0}
Consider an enumerative problem $U\subset T\times C$ of degree $d$ with Galois group $G$, and a Galois covering $\phi:\tT\to T$ with group of deck transformations $D$.
Then the Galois group $\tG$ of the $\phi$-wreath enumerative problem $\tU$ is contained in $W:=D\wr G$.
\end{observ}
This observation directly follows from the definitions of the wreath problem and the wreath group, and is widely known in the algebraic Galois theory (the Galois group of an iterated finite separable field extension is a subgroup of the corresponding wreath product, see e.g. \cite{wreathgalois}). A less straightforward task is to find criteria for the equality $\tG=W$. We present a criterion based on the following notions.

\subsection{Inductive covers and solution lattices.}

In what follows, all singular homology groups are assumed to be with integer coefficients, so we omit the coefficient ring in the notation.

\begin{defin}
A covering of path-connected topological spaces $\pi:X\to Y$ is said to be \emph{inductive}, if the natural embedding $\pi_1'(X)\to\pi_1'(Y)$ is an isomorphism (the prime stands for the commutator subgroup).
\end{defin}

\begin{rem}\label{reminduct}

1) For an inductive covering $\pi:X\to Y$, the map $\pi_*:\h{X}\to\h{Y}$ is injective. Indeed, according to the diagram \begin{equation}\label{eq:comdiag}
    \begin{tikzcd}
\pi_1(X) \arrow[r,symbol=\longrightarrow] \arrow[d,symbol=\hookrightarrow] & H_1(X) \arrow{d}{\pi_*} \\
\pi_1(Y)  \arrow[r,symbol=\longrightarrow] & H_1(Y) 
\end{tikzcd},
\end{equation} 
a non-zero element in the kernel of $\pi_*$ would lift to an element in $\pi_1(X)\setminus \pi_1'(X)$ mapping to the kernel of $\pi_1(Y)\rightarrow H_1(Y)$, that is $\pi_1'(Y)$.

2) Note that the preceding implication is not an equivalence: if $X\to Y$ is the universal covering of a bouquet of circles, then the map $0=H_1(X)\to H_1(Y)$ is injective, while $0=\pi_1'(X)\to\pi_1'(Y)$ is not an isomorphism.

3) Every inductive covering is 
Galois with a commutative deck transformation group. Indeed, every subgroup $H$ of a group $G$ containing the commutator is normal, and the quotient $G/H$ is commutative. Applying this fact to the image of $\pi_1(X)$ in $\pi_1(Y)$, the sought statements follow from Propositions 1.39 and 1.40 in \cite{Hatcher}.

4) Note that the preceding implication is not an equivalence: if $X\to Y$ is the non-trivial two-fold covering of the bouquet of two circles, whose restriction to one of the circles is trivial, then it is Galois with the group of deck transformations $\Z/2\Z$, but not inductive (as the map $H_1(X)\to H_1(Y)$ is not injective).
\end{rem}

\begin{defin}
An inductive covering $\pi:X\to Y$ is said to be \emph{strongly inductive}, if the embedding $H_1^T(X)\to H_1^T(Y)$ (see Remark \ref{reminduct}.1) is an isomorphism ($T$ stands for the torsion subgroup).
\end{defin}
The following sources of strongly inductive coverings will be especially important for us.
\begin{exa}\label{rem:algebraicgroups}\label{exastrongind}
1) The map $\CC\to\CC,\, x\mapsto x^d$, is strongly inductive.
 
2) If $X\to Y$ is strongly inductive, then so is $X\times Z\to Y\times Z$.

3) The composition of (strongly) inductive coverings is (strongly) inductive.

4) 
Conversely, if a (strongly) inductive covering $\pi:X\to Y$ decomposes into covering maps $X\rightarrow \tilde X \rightarrow Y$, then both of them are (strongly) inductive.
Indeed, we have the factorization $\pi_1'(X)\rightarrow\pi_1'(\tilde{X})\rightarrow\pi_1'(Y)$. Since the composition of the two arrows is an isomorphism by assumption, and each arrow is injective, it is an isomorphism as well. Similarly, each arrow in $H_1^T(X)\rightarrow H_1^T(\tilde{X})\rightarrow H_1^T(Y)$ is an isomorphism. 

5) Every covering of a space with commutative fundamental group (in particular, every connected algebraic group covering over $\C$) is inductive. 
A particular instance is given by surjective morphisms $\pi:X\to Y$ between complex tori $X$ and $Y$ of the same dimension. Since $\h{X}$ and $\h{Y}$ have no torsion, the covering $\pi$ is strongly inductive. 
\end{exa}

Given an enumerative problem defined by a degree $d$ covering $c:U\to C$, choose a base point $f$ in $C$, and denote the set $c^{-1}(f)$ of its $d$ solutions by $S$. Consider an $f$-pointed loop $\gamma$ in $C$ whose induced permutation on $S$ is the identity. Then, along this loop, every solution $s\in S$ travels a loop in $T$, and the homology class of this loop will be denoted by $\gamma_s\in H_1(T)$. Ordering the elements $\gamma_s$ accordingly to a given ranking  
$\delta:S\to\{1,2,\ldots,d\}$, we obtain a vector $\gamma_\delta$ in $H_1(T)^d$.
\begin{defin}
For a given ranking $\delta$, the set of all vectors of the form $\gamma_\delta$ is called the \emph{solution lattice} $H_\delta\subset H_1(T)^d$.
\end{defin}
\begin{rem}\label{remsollat}
1) The solution lattice $H_\delta$ is a subgroup in $H_1(T)^d$ since it is the image of the homomorphism $\gamma \mapsto \gamma_\delta$ defined on the kernel of the monodromy map $\pi_1(C)\rightarrow \sym(S)$.

2) The lattices $H_\delta$ for various $\delta$ differ by the permutations of the multipliers in $H_1(T)^d$.
\end{rem}

\begin{theor}\label{mainth}
In the setting of Observation \ref{mainth0}, assume additionally that $\phi$ is strongly inductive. Then, the inclusion $\tG\subset W$ is an equality if and only if, for some ranking $\delta$ (or, equivalently, for every ranking), we have 
\begin{equation}\label{eq:solutionlattice}
    \h{T}^d=H_\delta+\phi_* \h{\tT}^d.
\end{equation}
\end{theor}
The proof is given in Subsection \ref{proofofmainth}. The importance of this theorem is that the study of (non-commutative) fundamental groups is replaced with the study of (commutative) homology. As an illustration of this advantage, we now prove Theorem \ref{thdim1nonred} by constructing the solution lattice of the underlying enumerative problem. 
\subsection{Application to sparse polynomial equations.}
Theorem \ref{thdim1nonred} is a corollary of the preceding theorem and the following lemma.
\begin{lemma}\label{ldim1nonred}
In the setting of Example \ref{exaenum}.1, if ${\rm GCD}(A)=1$, then the solution lattice equals $H_1\CC^a=\Z^a$.
\end{lemma}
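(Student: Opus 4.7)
The plan is to exploit the full symmetric monodromy $G_A = \sym_a$ granted by Observation \ref{thdim1red} (applicable since $\gcd(A) = 1$) together with an explicit construction of Newton-polygon degeneration loops, reducing the problem to a lattice-generation exercise in $\Z^a = H_1(\CC)^a$.

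First, I would show that $H_\delta \subset \Z^a$ is invariant under the $\sym_a$-action permuting coordinates. For any $\gamma$ in the monodromy kernel $K$ and any loop $\gamma'$ with monodromy $\sigma \in \sym_a$, the conjugate $\gamma' \gamma (\gamma')^{-1}$ remains in $K$, and its winding vector is obtained from $\gamma_\delta$ by the coordinate permutation $\delta \sigma \delta^{-1}$. As $\sigma$ ranges over $\sym_a = G_A$, so does $\delta \sigma \delta^{-1}$, establishing the $\sym_a$-invariance of $H_\delta$.

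Second, for each $i \in \{0, \ldots, k-1\}$, I would construct an \emph{edge vector} $v_i \in H_\delta$ having $a_{i+1} - a_i$ entries equal to $1$ (on the positions corresponding to the $a_{i+1} - a_i$ roots arising from the edge $[a_i, a_{i+1}]$ in a suitable degeneration) and the remaining entries equal to $0$. Concretely, I would choose a basepoint $f \in C_\circ$ with $|c_j| \ll |c_i|, |c_{i+1}|$ for $j \notin \{i, i+1\}$; by the Newton polygon principle, the $a$ roots then split into $a_i$ small roots near $0$, $a_{i+1} - a_i$ edge roots of order one, and $a - a_{i+1}$ large roots near $\infty$. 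The loop $c_i \mapsto c_i e^{2\pi i (a_{i+1} - a_i) t}$, combined for $i \geq 1$ with $c_0 \mapsto c_0 e^{2\pi i (a_{i+1} - a_i) t}$ in order to cancel the induced winding of the small roots, is a loop in $C_\circ$ with trivial monodromy whose winding vector is $v_i$: the small and large roots return with winding zero while each edge root winds once around the origin. The main obstacle is making this construction rigorous for internal edges, for which one must verify via Puiseux expansions that the simultaneous rotation of $c_0$ and $c_i$ has exactly the claimed effect on all three groups of roots; the outer-edge cases $i = 0$ and $i = k-1$ admit direct verification.

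Finally, I would combine the two ingredients. By the first step the $\sym_a$-orbit of each $v_i$ lies in $H_\delta$, and suitable differences $v_0 - \sigma(v_0)$ for transpositions $\sigma$ swapping a $1$-entry with a $0$-entry realize the elementary vectors $e_j - e_{j'}$ for arbitrary $j \neq j'$; hence $H_\delta$ contains the root sublattice $R_0 := \{x \in \Z^a : \sum x_i = 0\}$. The coordinate sums of $v_0, v_1, \ldots, v_{k-1}$ are $a_1, a_2 - a_1, \ldots, a_k - a_{k-1}$, whose greatest common divisor equals $\gcd(a_1, \ldots, a_k) = \gcd(A) = 1$; hence some $\Z$-combination of the $v_i$ has coordinate sum $1$. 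Together with $R_0 \subset H_\delta$, this forces $H_\delta = \Z^a$.
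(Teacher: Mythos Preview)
Your proof is correct, and the overall architecture matches the paper's: use Observation~\ref{thdim1red} to get $\sym_a$-invariance of $H_\delta$, manufacture explicit homogeneous vectors in $H_\delta$ via degeneration loops, and finish with a $\gcd$ argument. The difference is in the middle step. The paper specializes to the trinomial $\varepsilon + x^{a_j} + x^{a}$ and lets $\varepsilon$ run $a_j$ times around $0$; only one coefficient moves, the roots split into just two groups (the $a_j$ small roots cycling around $0$ and the others sitting still), and one directly obtains the homogeneous vector with $a_j$ ones for every $j$. A ``Euclidean algorithm on the number of ones'' (permute and subtract) then reaches $(1,0,\ldots,0)$. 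Your edge-degeneration loops instead rotate two coefficients at once to isolate the $a_{i+1}-a_i$ roots on a single Newton edge; this is correct but carries exactly the internal-edge technicality you flag, which the trinomial construction sidesteps entirely. In exchange, your final lattice step (root sublattice $R_0$ plus the coordinate-sum map) is slightly more structural than the paper's, and your edge-based construction is closer in spirit to the multivariate argument the paper runs later (Lemma~\ref{lcentralloop}), where one genuinely degenerates to a facet resultant rather than to a trinomial.
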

\begin{proof}
I. By Observation \ref{thdim1red}, the Galois group of the enumerative problem is full symmetric, thus the solution lattice is symmetric under the natural action of $\sym_a$.

II. It follows from $(I)$ and Remark \ref{remsollat}.1 that, once the solution lattice contains vectors 
$$(\underbrace{1,\ldots,1}_{p},0,\ldots,0) \mbox{ and } (\underbrace{1,\ldots,1}_{q},0,\ldots,0)\in\Z^a,$$
it contains the vector $(\underbrace{1,\ldots,1}_{|p-q|},0,\ldots,0)$.

III. For every $a_j\in A$, the solution lattice contains the vector
\begin{equation}\label{eq:vector}
    (\underbrace{1,\ldots,1}_{a_j},0,\ldots,0)\in\Z^a.
\end{equation}
To prove this, consider the trinomial $g(x)=\varepsilon+x^{a_j}+x^a$. It has $a_j$ small roots (tending to 0 as $\varepsilon\to 0$) and $a-a_j$ other roots. As $\varepsilon$ runs a small loop around $0$, the small roots permute in a cycle around $0$, and the other roots travel small loops not linked with $0$: 
\begin{center}
\includegraphics[width=4cm]{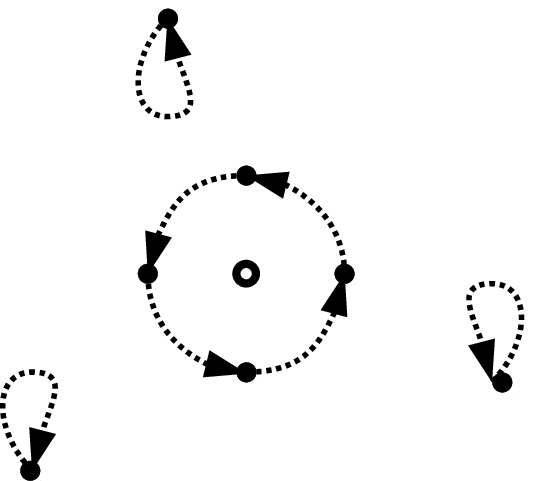}
\end{center}

Thus, if $\varepsilon$ runs $a_j$ times around $0$, then the roots of $g$ permute trivially, generating the element \eqref{eq:vector} in the solution lattice.

IV. Using (II), we can run the Euclidean algorithm on the vectors \eqref{eq:vector} for all $j$ and conclude that the solution lattice contains the vector $$(\underbrace{1,\ldots,1}_{{\rm GCD}(A)},0,\ldots,0).$$
Since we are given ${\rm GCD}(A)=1$, we have proved that $(1,0,\ldots,0)\in H_\delta$ and hence by $(I)$ the other vectors of the standard basis are contained in the solution lattice.
\end{proof}
\begin{rem}
The proof essentially consists of constructing loops in the space of conditions, generating the sought solution lattice. One could try instead to prove Theorem \ref{thdim1nonred} by directly constructing loops generating the sought monodromy group, and observe that this task is drastically more complicated (cf. our subsequent paper \cite{el2}).
\end{rem}

\subsection{Inductive connectivity}

The proof of Theorem \ref{mainth} is based on the following purely topological observation.
\begin{theor}\label{topolth0} Assume that $\pi:X\to Y$ is a strongly inductive covering of finite degree. The preimage of a path-connected subset $V\xhookrightarrow{j}Y$ is path-connected if and only if  $$j_*\h{V}+\pi_*(\h{X})=\h{Y}.$$
\end{theor}
The proof is given below in the form of Theorem \ref{corolmain}. It is based on decomposing the covering $\pi$ into a composition of cyclic covers and proving a stronger statement by induction on the length of this decomposition (hence the name ``inductive'').
\begin{rem}
1) Since irreducible components of a complex algebraic set are in one to one correspondence with the connected components of its smooth part, Theorem \ref{topolth0} may help to prove that the preimage of an irreducible algebraic variety $\,\mathcal{V}\subset Y$ under an algebraic cover $X\to Y$ is irreducible, when applied to the smooth part (or any sufficiently small Zariski open subset) of $\,\mathcal{V}$.

2) Since in what follows we apply this theorem to algebraic covers, we are fine with the assumption of finite degree. However, it would be interesting to understand to what extent one can omit assumptions on the cardinality of a fiber.

\end{rem}

\begin{exa}
The assumption of strong inductivity cannot be relaxed to inductivity in Theorem \ref{topolth0}.
For instance, consider the lens space $Y$ defined as the quotient of $X=(\C^2\setminus 0)$ by $(e^{2\pi i / p}, e^{2\pi i q / p})$. We have $\pi_1(X)=H_1(X)=0$ and $\pi_1(Y)=H_1(Y)=\Z/p\Z$.
For every line $V \subset \C^2$ through 0, its image $V\xhookrightarrow{j}Y$ generates the same subgroup $j_* H_1(V) \subset H_1(Y)$ for all possible choices of $V$, but the preimage of V may be reduced or not (and with different number of components) depending on $V$.
\end{exa}

\subsection{Systems of sparse equations}\label{ssparsesys}

We now discuss the generalization of Theorem \ref{thdim1nonred} to systems of several equations.

Identifying a point $a:=(a_1,\ldots,a_n)\in\Z^n$ with the monomial $x^a:=x_1^{a_1}\ldots x_n^{a_n}$, every finite set $P\subset\Z^n$ gives rise to the vector space $\C^P=\{\sum_{a\in P} c_ax^a\}$ of \emph{Laurent polynomials supported at $P$}. Every such polynomial defines a function on \emph{the complex torus} $T:=\CC^n$.

For a tuple of finite sets $A:=(A_1,\ldots,A_n),\, A_i\subset\Z^n$, a tuple $f:=(f_1,\ldots,f_n)$ from the space $\C^A:=\C^{A_1}\oplus\ldots\oplus\C^{A_n}$ can be regarded as a system of polynomial equations $f=0$ in the torus $T$. According to the Kouchnirenko--Bernstein theorem \cite[Theorem A]{bernst}, the number $d$ of solutions of this system equals the mixed volume of the convex hulls of $A_1,\ldots,A_n$ unless the tuple $f$ belongs to a certain proper Zariski closed subset $D\subset\C^A$, called the \emph{bifurcation set}.

We shall study the \emph{$A$-enumerative problem} with ambient space $T=\CC^n$, space of conditions $C=\C^A\setminus D$ and space of solutions $U=\{(x,f)\,|\, f(x)=0\}\subset T\times C$. 
In particular, we shall generalize Theorem \ref{thdim1nonred} to the Galois group $G_A\subset \sym_d$ of this enumerative problem.

\begin{theor}[{\cite[Theorem 1.5]{E18}}]\label{thred0} The Galois group $G_A$ equals the symmetric group $\sym_d$, if $A$ is reduced and irreducible in the following sense.
\end{theor}
\begin{defin}\label{def:redirred}
A tuple of finite sets $A:=(A_1,\ldots,A_n),\, A_i\subset\Z^n$, (and the corresponding system of equations with indeterminate coefficients) is said to be \emph{non-reduced} if all sets can be shifted to the same proper sublattice, and \emph{reducible} if $k$ of them can be shifted to a rank $k$ sublattice for some $k<n$.
\end{defin}
\begin{rem}
Theorem \ref{thred0} allows to classify systems of sparse equations solvable by radicals, see \cite{E18} for details. More precisely, it reduces this problem to the classification of lattice polytopes of small mixed volume, see e.g. \cite{abs} for recent advances in this direction. However, the complete computation of Galois groups is an open question both for non-reduced and for reducible systems, even in the simplest non-trivial case of a system of two trinomial equations of two variables.
\end{rem}
\begin{exa} The equation with indeterminate coefficients $c_8x^8+c_4x^4+c_0=0$ supported at $\{0,4,8\}$ is non-reduced.  The system $f(x)=g(x,y)=0$ is reducible.
\end{exa}
\begin{rem} 1) There is no loss of generality in assuming that $0\in A_i$ for $i=1,\ldots,n$. Indeed, we can divide every equation of a system by a certain monomial so that the resulting system satisfies the above assumption. Since dividing by a monomial does not affect the roots of the system in the complex torus, the resulting system has the same monodromy as the initial one. Therefore, we will always work under this assumption throughout this paper.

2) Under this assumption, we can interpret non-reduced systems as systems that can be simplified by a monomial change of variables, and reducible systems as systems that have a proper square subsystem of equations upon an appropriate monomial change of coordinates, as in the preceding example.

3) The most natural source of systems of sparse equations comes from the theory of Newton polytopes, when $A_1=\ldots=A_n$ is the set of lattice points of a lattice polytope $P$. Note that even in this case, starting from dimension 3, such systems may be non-reduced, i.e. the lattice points of the polytope $P$ may not generate the ambient lattice. Such polytopes are called non-reduced or non-spanning, see e.g. \cite{ht} for the classification up to volume 4.
\end{rem}
\subsection{Expected Galois groups of non-reduced systems.}\label{ssparsesysexp}
The simplification of a non-reduced system of equations with a monomial change of coordinates presents the corresponding enumerative problem as a wreath problem over a simpler one. Namely, let $\tN \simeq \Z^n$ be a lattice and  $\tilde A:=(\tilde A_1,\ldots,\tilde A_n)$, $\tilde A_i\subset \tN$, be a non-reduced irreducible tuple. Then, there exist a lattice $N\simeq \Z^n$, a reduced irreducible tuple $A=(A_1,\ldots,A_n)$, $A_i\subset N$, and a proper linear embedding $L:N\to\tN$ such that $\tilde A_i=L(A_i)$ for $i=1,\dots,n$. We call the tuple $A$ a \emph{reduction} of $\tilde A$. 
\begin{rem}\label{rem:reduction}
The reduction $A$ of $\tilde A$ is unique up to affine linear automorphism of $N$. 

\end{rem}
Given a reduction $A$ of a non-reduced tuple $\tA=L(A)$, denote by $T:=\Hom(N,\cC)$, by $\tT:=\Hom(\tN,\cC)$, and by $\phi_L: \tT \rightarrow T$  the map of tori induced by $L$. 
In coordinates, this is a monomial change of variables that takes every $f\in\C^A$ to $\tilde f(x):=f(\phi_L(x))\in\C^{\tA}$. The map assigning $\tilde f$ to $f$ is an isomorphism of vector spaces $\C^A\to\C^{\tilde A}$. Under this identification, the $\tA$-enumerative problem is the $\phi_L$-wreath over the $A$-enumerative problem. In particular, Theorem \ref{thred0} ensures that the Galois group $G_{\tA}$ is contained in the wreath product 
\begin{equation}\label{eq:wreathprod}
    \ker\phi_L\wr\sym_{d}.
\end{equation}
\begin{rem}\label{rem:iso} Note that the following three groups are canonically isomorphic to each other, and non-canonically isomorphic to $\coker L$:

-- $\ker \phi_L \subset T$;

-- $\coker L^*$, where $L^*$ is the lattice embedding dual to $L$;

-- the Pontryagin dual to the group $\coker L$.
\end{rem}

In contrast to the one-dimensional case, the embedding of the Galois group $G_A$ to \eqref{eq:wreathprod} can be proper (see an Example \ref{ex:smaller} below), but the equality still holds for a large class of systems of equations.

For a finite subset $B\in\Z^m$, denote the intersection of $B$ with the boundary of its convex hull by $\partial B$.
\begin{theor}\label{thnonred0}
Assume that $n=2$, $\tilde A_1=\tilde A_2=:B$, and (with no loss of generality) that $0\in\partial B$. Then the Galois group $G_{\tA}$ equals the expected wreath product \eqref{eq:wreathprod} provided that $\partial B$ generates the same sublattice of $\Z^2$ as $B$.
\end{theor}
In Section \ref{sec:iiss} we prove a stronger version of this fact (Theorem \ref{thintro} and further Theorem \ref{thh1}) for arbitrary dimension and tuples of non-equal support sets. In particular, this stronger version completely characterizes tuples $\tA_1=\ldots=\tA_n$ for which the Galois group $G_{\tA}$ is the expected wreath product.

The proof is based on a multidimensional version of Lemma \ref{ldim1nonred}. The multidimensional version of the trinomial deformation in the proof of this lemma becomes the most complicated part of the proof, requiring non-trivial considerations from the geometry of $A$-resultants and $A$-discriminants.

\subsection{Unexpected Galois groups of non-reduced systems.}\label{ssparsesysunexp}
In contrast to the one-dimensional case, starting from dimension 2, there do exist tuples $\tilde A$ such that $G_{\tA}$ is strictly smaller than the expected wreath product \eqref{eq:wreathprod}.

\begin{exa}\label{ex:smaller} Consider the non-reduced tuple $(Q,Q)$ with reduction $(P,P)$ where $P$ and $Q$ are pictured below. Since $\partial Q$ generates a strictly smaller sublattice than $Q$, Theorem \ref{thnonred0} does not ensure that the Galois group is the expected wreath product. Actually, we shall see that it is twice smaller in this particular case, although we do not know how to compute such non-expected Galois groups in general.
\begin{center}
%% Creator: Inkscape inkscape 0.91, www.inkscape.org
%% PDF/EPS/PS + LaTeX output extension by Johan Engelen, 2010
%% Accompanies image file '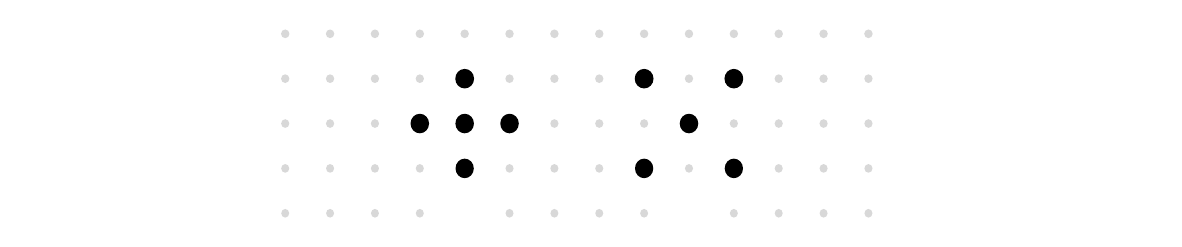' (pdf, eps, ps)
%%
%% To include the image in your LaTeX document, write
%%   \input{<filename>.pdf_tex}
%%  instead of
%%   \includegraphics{<filename>.pdf}
%% To scale the image, write
%%   \def\svgwidth{<desired width>}
%%   \input{<filename>.pdf_tex}
%%  instead of
%%   \includegraphics[width=<desired width>]{<filename>.pdf}
%%
%% Images with a different path to the parent latex file can
%% be accessed with the `import' package (which may need to be
%% installed) using
%%   \usepackage{import}
%% in the preamble, and then including the image with
%%   \import{<path to file>}{<filename>.pdf_tex}
%% Alternatively, one can specify
%%   \graphicspath{{<path to file>/}}
%% 
%% For more information, please see info/svg-inkscape on CTAN:
%%   http://tug.ctan.org/tex-archive/info/svg-inkscape
%%
\begingroup%
  \makeatletter%
  \providecommand\color[2][]{%
    \errmessage{(Inkscape) Color is used for the text in Inkscape, but the package 'color.sty' is not loaded}%
    \renewcommand\color[2][]{}%
  }%
  \providecommand\transparent[1]{%
    \errmessage{(Inkscape) Transparency is used (non-zero) for the text in Inkscape, but the package 'transparent.sty' is not loaded}%
    \renewcommand\transparent[1]{}%
  }%
  \providecommand\rotatebox[2]{#2}%
  \ifx\svgwidth\undefined%
    \setlength{\unitlength}{340.15748031bp}%
    \ifx\svgscale\undefined%
      \relax%
    \else%
      \setlength{\unitlength}{\unitlength * \real{\svgscale}}%
    \fi%
  \else%
    \setlength{\unitlength}{\svgwidth}%
  \fi%
  \global\let\svgwidth\undefined%
  \global\let\svgscale\undefined%
  \makeatother%
  \begin{picture}(1,0.20833333)%
    \put(0,0){\includegraphics[width=\unitlength,page=1]{diamond.pdf}}%
    \put(0.37988885,0.004413){\color[rgb]{0,0,0}\makebox(0,0)[lb]{\smash{$P$}}}%
    \put(0.56833508,0.0056919){\color[rgb]{0,0,0}\makebox(0,0)[lb]{\smash{$Q$}}}%
  \end{picture}%
\endgroup%

\end{center}
\end{exa}

This can be seen 
as follows. The monodromy group consists of permutations of the eight roots along loops in the set $\C^Q\times\C^Q\setminus D$, where $\C^Q\times\C^Q$ is the space of systems of equations supported at $Q\subset\Z^2$, and $D$ is the bifurcation set (i.e. the set of all systems with more or less than eight roots). Thus, the monodromy group is generated by permutations, whose cyclic type is the same as for permutations along small loops around the components $D_i$ of the bifurcation set $D$.  

Applying the description of the irreducible components of the bifurcation set (Proposition 1.11/4.10 in the arXiv/journal version of \cite{adv} respectively)
to our case, we see that $D$ consists of 5 irreducible components: one component (the discriminant $D_0$) consists of systems with a root of multiplicity 2 (and hence two roots of multiplicity 2, because $Q$ generates an index 2 sublattice in $\Z^2$), and the other four components consist of systems with a root at one of the 4 one-dimensional orbits
of the toric variety $\CP^1\times\CP^1\supset\CC^2$. Thus, the permutation of roots along a small loop around $D_0$ consists of two disjoint transpositions. 

The other components $D_i$ of $D$ correspond to the edges $Q_i$ of the convex hull $Q$. By the same result from \cite{adv}, 
a generic system of equations from $D_i$ has several roots of multiplicity 1 in the complex torus and several roots of multiplicity $h$ at the $Q_i$-orbit of the $Q$-toric variety, where $h$ is the lattice distance from the line containing $Q_i$ to $Q\setminus Q_i$. In our case, $h=1$ for each of the four edges, so the permutations along small loops around the other four components of $D$ are trivial.

Thus the monodromy group is contained in $A_8\subset \sym_8$, while the wreath product $(\Z/2\Z)\wr \sym_4$ is not. Actually, one can manually check that the group $G_Q\subset \sym_8$ is the intersection of $(\Z/2\Z)\wr \sym_4$ with $A_8$.

\begin{rem}  In Section \ref{sec:iiss}, we  completely answer the following question for tuples $A_1=\ldots=A_n\subset\Z^n$:
\begin{equation}\label{eq:problem}
    \mbox{\it Determine whether the Galois group } G_A \; \mbox{\it equals the expected wreath product or not.}
\end{equation}
Taking this into account, one can distinguish three further key open questions in the study of Galois groups of general systems of polynomial equations.

1) If the answer to the question \eqref{eq:problem} is negative, how to compute the Galois group $G_A$ precisely? The question is open even for a system of two trinomials equations of two variables.

2) It is a purely combinatorial, but open and highly non-trivial problem to decide whether the results of this paper actually allow to answer the question \eqref{eq:problem} for every irreducible tuple. 
See Remark \ref{remincr2} for a precise combinatorial question.

3) We do not see a straightforward way to apply the technique from the present paper to the study of the Galois group for reducible tuples (such that $k$ of the sets can be shifted to the same $k$-dimensional sublattice). The Galois group is unknown even for a reducible system of two trinomial equations.

\end{rem}

\section{Inductive connectivity and the proof of Theorem \ref{mainth}}
In this section we introduce topological tools leading to the proof of Theorem \ref{topolth0} on the connectivity of inverse images under inductive coverings and, ultimately, Theorem \ref{mainth} on the monodromy of wreath enumerative problems.

Recall that in this paper all homology groups are defined over $\Z$. For the sake of brevity, we will therefore drop the coefficient group from the notation.  

Recall that a finite covering of connected spaces $\pi:X\to Y$ induces the natural pullback map $\pi^*:H_\bullet(Y)\to H_\bullet(X)$. At the level of chains, this map sends every singular simplex to the sum of its preimages. We refer to \cite{Hatcher} for more details on covering maps.

\begin{defin} An element of an Abelian group is \emph{primitive} if it cannot be written as the sum of $k>1$ copies of another element modulo torsion. 
\end{defin}
\subsection{Lifting connectivity in covering spaces}
\begin{lemma}\label{thindir2}
Let $\pi:X\to Y$ be a degree $d$ covering, and $V\subset Y$ be path-connected. If $V$ contains a singular 1-cycle $c$, 
such that $d\cdot c$ represents a primitive element in $\pi_*H_1(X)$,
then the preimage $U=\pi^{-1}(V)$ is path-connected.
\end{lemma}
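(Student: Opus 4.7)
The plan is to argue by contradiction: assume $U := \pi^{-1}(V)$ decomposes into $k \geq 2$ path-components $U_1, \ldots, U_k$. Since $V$ is (locally) path-connected, the restriction $\pi|_{U_i} : U_i \to V$ is a finite covering of some positive degree $d_i$, with $\sum_i d_i = d$. The algebraic input will come from the transfer maps of these coverings.

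For each $i$, lifting $c$ along $\pi|_{U_i}$ at the chain level (each simplex replaced by the sum of its $d_i$ preimages in $U_i$) produces a $1$-cycle $c_i' \in H_1(U_i) \subset H_1(X)$ satisfying $\pi_*[c_i'] = d_i \cdot [c]$ in $H_1(Y)$; in particular $d_i \cdot [c] \in \pi_* H_1(X)$ for every $i$. Setting $g := \gcd(d_1, \ldots, d_k)$, Bezout's identity writes $g \cdot [c]$ as an integer combination of the $d_i \cdot [c]$, so $g \cdot [c] \in \pi_* H_1(X)$ as well. The inequalities $k \geq 2$ and $d_i \geq 1$ force $g \leq \min_i d_i \leq d - 1$, while $g$ divides $\sum_i d_i = d$, giving $d/g \geq 2$. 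The identity
\begin{equation*}
d \cdot [c] \;=\; (d/g) \cdot (g \cdot [c])
\end{equation*}
then expresses $d \cdot [c]$ as the $(d/g)$-fold multiple of another element of $\pi_* H_1(X)$. Since primitivity of $d \cdot [c]$ prevents it from being torsion, $g \cdot [c]$ must be non-torsion as well, so the displayed identity contradicts the primitivity of $d \cdot [c]$ in $\pi_* H_1(X)$. Hence $k = 1$ and $U$ is path-connected.

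The main technical point to verify carefully is the chain-level transfer, together with the exact (not merely modulo torsion) equality $\pi_*[c_i'] = d_i \cdot [c]$; the primitivity argument is sensitive to this. A secondary subtlety is the bookkeeping of torsion when interpreting ``primitive'' --- one needs to rule out the possibility that $g \cdot [c]$ is torsion, which follows because $d \cdot [c] = (d/g)(g \cdot [c])$ would otherwise be torsion too. Once these points are in hand, the number-theoretic tail (Bezout plus the inequality $g \leq d - 1 < d$) is entirely elementary.
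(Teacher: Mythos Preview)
Your argument is correct and follows the same strategy as the paper: assume the preimage is disconnected, use the degrees $d_i$ of the pieces to produce elements $d_i[c]\in\pi_*H_1(X)$, and let a Bezout/GCD step contradict the primitivity of $d[c]$. The paper first replaces $c$ by a loop and works with the support of the pullback $b=\pi^*c$ inside $X$, splitting it as $b_1+b_2$ and invoking the explicit identity $b=(k_1'+k_2')(n_1b_1+n_2b_2)\bmod(k_2'b_1-k_1'b_2)$ in $H_1(X)/\ker\pi_*$; your version, which stays in $\pi_*H_1(X)$ and runs Bezout over all components at once to obtain $g[c]\in\pi_*H_1(X)$ and then $d[c]=(d/g)\cdot(g[c])$, is slightly more direct. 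One small correction: local path-connectedness of $V$ is neither assumed nor needed. The transfer to $U_i$ works from path-connectedness of $V$ alone: path-lifting shows that $|U_i\cap\pi^{-1}(v)|$ is the same integer $d_i$ for every $v\in V$, so each singular $1$-simplex of $c$ has exactly $d_i$ lifts landing in $U_i$, and the resulting chain is a cycle because this $0$-chain transfer commutes with $\partial$. Your torsion bookkeeping is also more cautious than necessary: once $g[c]\in\pi_*H_1(X)$ and $d/g\ge 2$, the equality $d[c]=(d/g)(g[c])$ already violates primitivity as defined in the paper, with no separate torsion check required.
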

\begin{rem} Since $d\cdot c=\pi_*\pi^* c$, we have:

-- the 1-cycle $d\cdot c$ always represents an element in $\pi_*H_1(X)$;

-- its primitivity in $\pi_*H_1(X)$ 
is equivalent to the primitivity of $\pi^*c$ in the group $H_1(X)/\ker \pi_*$ (and actually the latter restatement will be mostly used in what follows).
\end{rem}
{\it Proof.} 
Since every singular 1-cycle in a path-connected space is homological to a loop, we can assume with no loss in generality that the cycle $c$ in $V$ is a loop, pointed at some point $y$. 
Thus the lemma is reduced to the following special case. \hfill $\square$
\begin{lemma} \label{lconnect}
Let $\pi:X\to Y$ be a degree $d$ covering, let $c$ be a loop in $Y$, and denote by $b$ the singular 1-cycle $\pi^*c$ in $X$.
If $b$ represents a primitive element in $H_1(X)/\ker \pi_*$, then its support set $|b|$ is path-connected.
\end{lemma}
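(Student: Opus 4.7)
The plan is to establish the contrapositive: if $|b|$ is disconnected, then $[b]$ is divisible by an integer of absolute value at least $2$ in $\h{X}/\ker\pi_*$, and hence fails to be primitive, even modulo torsion.

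First, I would analyse the structure of $|b|$ via the pullback covering. Writing $c\colon S^1\to Y$ for the loop representing the cycle, the pullback $c^*X\to S^1$ is a degree $d$ covering of $S^1$, hence a disjoint union of circles $\tilde S_1\sqcup\cdots\sqcup\tilde S_r$ with $\tilde S_i\to S^1$ of degree $k_i$ and $k_1+\cdots+k_r=d$. The natural map $\tilde c\colon c^*X\to X$ has image $\pi^{-1}(|c|)=|b|$, and each $\tilde S_i$ maps to a loop $c_i$ in $X$. At the level of chains, $b=\sum_{i=1}^r c_i$, since both sides are the formal sum of the $d$ lifts of the singular simplex for $c$, grouped according to the orbits of the monodromy of $c$ on the fiber of $\pi$ above the base-point. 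Denoting by $[c]\in\h{Y}$ the homology class of $c$, one obtains
\[
[b]=[c_1]+\cdots+[c_r]\ \text{in}\ \h{X},\qquad \pi_*[c_i]=k_i[c].
\]
In particular, since $|b|=\bigcup_i\tilde c(\tilde S_i)$ is a finite union of path-connected sets, disconnectedness of $|b|$ forces $r\geq 2$.

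Second, the algebraic step: set $e=\gcd(k_1,\ldots,k_r)$, so $d/e=\sum_i(k_i/e)\geq r\geq 2$. By Bezout, choose $\alpha_i\in\Z$ with $\sum_i\alpha_ik_i=e$, and define $m=\sum_i\alpha_i[c_i]\in\h{X}$. Then $\pi_*(m)=e[c]$, so $\pi_*\bigl((d/e)\,m\bigr)=d[c]=\pi_*[b]$. Since $\pi_*$ descends to an injective map on the quotient $\h{X}/\ker\pi_*$, this forces the equality $[b]=(d/e)[m]$ in that quotient. Divisibility by $d/e\geq 2$ contradicts primitivity of $[b]$ modulo torsion, completing the contradiction.

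The main subtlety is the first step: identifying $|b|$ via the pullback covering, establishing the chain-level decomposition $b=\sum c_i$, and verifying $\pi_*[c_i]=k_i[c]$. Once this structural bookkeeping is in place, the Bezout-based divisibility calculation is short and routine.
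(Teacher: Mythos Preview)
Your proof is correct and follows the same approach as the paper's: decompose $b$ according to the components of the pullback covering, use that each piece pushes forward to a multiple of $[c]$, and apply a B\'ezout relation on the covering degrees to exhibit $[b]$ as $(d/\gcd)$ times another class in $H_1(X)/\ker\pi_*$. The only cosmetic difference is that the paper splits $b$ into two disjointly supported pieces $b_1,b_2$ and writes out an explicit identity $b=(k'_1+k'_2)(n_1b_1+n_2b_2)\bmod g$ with a torsion correction $g$, whereas you keep all $r$ components and verify the equality $[b]=(d/e)[m]$ directly by applying $\pi_*$; the underlying idea is identical.
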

{\it Proof.} Assume towards the contradiction that $b$ is the sum of non-zero 1-cycles $b_1$ and $b_2$ with disjoint support sets, covering the loop $c$ with degrees $k_1$ and $k_2$ respectively. 
Denote ${\rm GCD}(k_1,k_2)$ by $k$, and $k_i/k$ by $k'_i$. Choose an integer decomposition $1=n_1k'_1+n_2k'_2$. Then, we have the identity
$$
\begin{array}{l}
  (k'_1+k'_2)\cdot (n_1,n_2) \; = \;  (1-n_2k'_2+n_1k'_2,\; n_2k'_1+1-k'_1n_1) \\
   \\
   \hspace{3.3cm} = \; (1,1) + (n_2-n_1)\cdot(-k'_2,k'_1) \; = \;  (1,1) \;{\rm mod}\; (-k'_2,k'_1).
\end{array}
$$
Taking the formal dot product of the vector $(b_1,b_2)$ with the two sides of this identity, we have 
\begin{equation}\label{eq:notprimitive}
    b=b_1+b_2=(k'_1+k'_2)\cdot(n_1b_1+n_2b_2)\;{\rm mod}\; g,
\end{equation}
where $g=k'_2\cdot b_1-k'_1\cdot b_2$. 
Since the projection $\pi_*(b_i)$ equals $k_i\cdot c$ as a singular 1-cycle (and all the more so as a homology 1-cycle), we have $k_2\cdot b_1-k_1\cdot b_2\in\ker\pi_*$. Dividing the latter element by $k$, we conclude that $g$ is a torsion element of the group $H_1(X)/\ker \pi_*$. Thus \eqref{eq:notprimitive} 
contradicts the primitivity of $b$ in $H_1(X)/\ker \pi_*$. 
 \hfill $\square$\medskip
 
Under few mild additional assumptions, we can strengthen the statement of Lemma \ref{thindir2} by replacing primitivity with a weaker property.
\begin{defin} An element of an Abelian group is \emph{weakly primitive} if it cannot be written as the sum of $k>1$ copies of another element. 
\end{defin}
\begin{lemma}\label{thindir2weak}
Let $\pi:X\to Y$ be a degree $d$ covering, and $V\subset Y$ be path-connected. Assume additionally one of the following:

- $d$ is prime;

- the covering $\pi$ is Galois. 

If $V$ contains a singular 1-cycle $c$ 
such that $d\cdot c$ represents a weakly primitive element in $\pi_*H_1(X)$, then the preimage $U=\pi^{-1}(V)$ is path-connected. 
\end{lemma}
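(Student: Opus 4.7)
The plan is to proceed by contradiction, in parallel with the proof of Lemma~\ref{lconnect}, refining the argument under each of the two extra hypotheses. First I would replace $c$ by a homologous loop in $V$ (using path-connectedness of $V$), so that $b:=\pi^* c$ is the honest sum of lifts. Assuming $\pi^{-1}(V)$ has $s\geq 2$ connected components $U_1,\ldots,U_s$, I write $b=b_1+\cdots+b_s$ with $b_j:=\pi^* c|_{U_j}$, each a nonzero $1$-cycle in $U_j$ because $U_j$ is closed in $\pi^{-1}(V)$. The sub-covering $U_j\to V$ has some degree $e_j\geq 1$ with $\sum_j e_j=d$, and $\pi_* b_j=e_j c$. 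Throughout, I pass to $H_1(X)/\ker\pi_*$, using that weak primitivity of $d[c]\in\pi_* H_1(X)$ is equivalent, via the isomorphism $H_1(X)/\ker\pi_*\to\pi_* H_1(X)$ induced by $\pi_*$, to weak primitivity of $[b]$.

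For the Galois case the argument is essentially computation-free. The deck group $D$ acts transitively on $\{U_1,\ldots,U_s\}$: given $p_i\in U_i$ and $p_j\in U_j$, lift a path in $V$ from $\pi(p_j)$ to $\pi(p_i)$ starting at $p_j$, and use transitivity of $D$ on the fiber over $\pi(p_i)$ to find $\sigma\in D$ sending $p_i$ to the lifted endpoint. Then $\sigma(U_i)$ is connected and meets $U_j$, hence equals $U_j$. Inspecting lifts of simplices, such $\sigma$ satisfies $\sigma_* b_i=b_j$ as chains. Since $\pi\sigma=\pi$ forces $\sigma_*-\mathrm{id}$ to land in $\ker\pi_*$, all classes $[b_j]$ coincide in $H_1(X)/\ker\pi_*$, and so $[b]=s[b_1]$ with $s\geq 2$, contradicting weak primitivity of $[b]$.

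For the prime case I would partition the components into two nonempty groups, e.g.\ $U':=U_1$ and $U'':=U_2\sqcup\cdots\sqcup U_s$, producing a decomposition $b=b'+b''$ with $\pi_* b'=k'c$, $\pi_* b''=k''c$, $k'+k''=d$ and $0<k',k''<d$. Primality of $d$ forces $\gcd(k',k'')=1$, so the quantity $k$ from the proof of Lemma~\ref{lconnect} equals $1$. Re-running that Bézout calculation verbatim with $k=1$, the remainder $g:=k''b'-k'b''$ satisfies $\pi_* g=0$ exactly (not merely after multiplying by $k$), so $[g]\in\ker\pi_*$. The identity then collapses in $H_1(X)/\ker\pi_*$ to $[b]=d\cdot[n'b'+n''b'']$ for any Bézout pair $n'k'+n''k''=1$, again contradicting weak primitivity since $d\geq 2$.

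The main conceptual obstacle, and the reason the two alternative hypotheses appear, is ensuring that the residual class $g$ in the Bézout identity vanishes in $H_1(X)/\ker\pi_*$ rather than being merely torsion there: primitivity is insensitive to this torsion, but weak primitivity is not. Primality of $d$ eliminates the torsion by forcing coprimality of every two-part fiber partition, killing the denominator $k$ that produced it; Galois-ness bypasses the partition argument entirely by using deck transformations to equate the classes $[b_j]$ exactly. The two cases thus run in parallel but by slightly different mechanisms, and I do not see a single unified argument that would cover them both simultaneously.
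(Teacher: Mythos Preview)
Your proposal is correct and follows essentially the same approach as the paper: the paper likewise reduces to a loop $c$, takes $b=\pi^*c$, and then in the Galois case uses transitivity of the deck group on the components to get $b=k\cdot b_1$ modulo $\ker\pi_*$, while in the prime case it observes that the $\gcd$ $k$ in the proof of Lemma~\ref{lconnect} equals $1$, so the remainder $g$ lies in $\ker\pi_*$ exactly rather than merely being torsion. The only cosmetic difference is that you decompose $b$ along the components of $\pi^{-1}(V)$ directly, whereas the paper passes through the auxiliary Lemma~\ref{lconnectweak} about the components of $|b|$; your concluding paragraph correctly identifies why the two hypotheses are needed.
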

As well as Lemma \ref{thindir2}, this one follows from the appropriate version of Lemma \ref{lconnect} below.
\begin{lemma} \label{lconnectweak}
Let $\pi:X\to Y$ be a degree $d$ covering, let $c$ be a loop in $Y$ and denote by $b$ the singular 1-cycle $\pi^*c$ in $X$.
Assume additionally one of the following:

- $d$ is prime;

- the covering $\pi$ is Galois.

If $b$ represents a weakly primitive element in $H_1(X)/\ker \pi_*$, then its support set $|b|$ is path-connected.
\end{lemma}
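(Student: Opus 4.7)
The plan is to follow the strategy of the proof of Lemma~\ref{lconnect}. Assuming for contradiction that $|b|$ is disconnected, I would decompose $\pi^{-1}(|c|)$ into its $m\geq 2$ connected components $L_1,\dots,L_m$ and correspondingly write $b=b_1+\cdots+b_m$, where $b_i$ is the sum of those lifts of $c$ whose image lies in $L_i$. Then $\pi_*b_i=k_ic$ as singular chains with $k_i\geq 1$ and $\sum_i k_i=d$, and the task is to derive a contradiction with the weak primitivity of $[b]\in H_1(X)/\ker\pi_*$.

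For the prime case, I would group the components into two nonempty families to obtain $b=\beta_1+\beta_2$ with $\pi_*\beta_j=\kappa_j c$ and $\kappa_1+\kappa_2=d$. Since $d$ is prime and $1\leq\kappa_j\leq d-1$, any common divisor of $\kappa_1,\kappa_2$ must divide $d$ and be smaller than $d$, forcing $\gcd(\kappa_1,\kappa_2)=1$. Picking $n_1,n_2$ with $n_1\kappa_1+n_2\kappa_2=1$, the algebraic identity from the proof of Lemma~\ref{lconnect} (specialized to $k=1$) yields
\begin{equation*}
 b \;=\; d\cdot(n_1\beta_1+n_2\beta_2) \;+\; (n_2-n_1)(\kappa_2\beta_1-\kappa_1\beta_2)
\end{equation*}
in $H_1(X)$. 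A chain-level computation gives $\pi_*(\kappa_2\beta_1-\kappa_1\beta_2)=(\kappa_1\kappa_2-\kappa_1\kappa_2)c=0$, so the correction term already lies in $\ker\pi_*$. Hence $[b]=d\cdot[n_1\beta_1+n_2\beta_2]$ in $H_1(X)/\ker\pi_*$, and $d\geq 2$ contradicts weak primitivity.

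For the Galois case, I would exploit the symmetry of the covering. The deck group $G$ acts freely and transitively on each fiber $F=\pi^{-1}(y)$ with $y\in|c|$, and the monodromy of $c$ is implemented by a fixed $\sigma\in G$. Under the identification $F\cong G$, monodromy becomes right multiplication by $\sigma$, whose orbits are the left cosets $g\langle\sigma\rangle$, bijecting with the components $L_1,\dots,L_m$; the left $G$-action on $G/\langle\sigma\rangle$ being transitive, I may choose $\tau_i\in G$ with $\tau_i(L_1)=L_i$. The homeomorphism $\tau_i$ then restricts to a bijection between the $|\sigma|$ lifts of $c$ landing in $L_1$ and those landing in $L_i$, so $(\tau_i)_*b_1=b_i$ as chains. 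Since $\pi\circ\tau_i=\pi$, one has $(\tau_i)_*[b_1]-[b_1]\in\ker\pi_*$, hence $[b_i]=[b_1]$ in $H_1(X)/\ker\pi_*$ for every~$i$. Summing yields $[b]=m\cdot[b_1]$ in the quotient, and $m\geq 2$ again contradicts weak primitivity.

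The main difficulty, relative to Lemma~\ref{lconnect}, is producing an exact divisibility relation for $[b]$ in $H_1(X)/\ker\pi_*$ rather than one modulo torsion, since weak primitivity leaves no torsion slack. This is precisely what the additional hypothesis delivers: primality of $d$ forces the gcd of partial degrees to equal $1$, so the correction term vanishes in the quotient outright; in the Galois setting the full symmetry of the deck action bypasses the gcd analysis entirely, producing the equality $[b_i]=[b_1]$ directly from deck transformations.
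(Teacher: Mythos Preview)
Your proof is correct and follows essentially the same approach as the paper. In the prime case you unwind exactly what the paper means by ``$k=1$ and thus $g\in\ker\pi_*$ in the proof of Lemma~\ref{lconnect}''; in the Galois case the paper argues more tersely that the deck group acts transitively on the components $s_1,\dots,s_k$ and hence on the summands $b_i$, giving $b_i-b_j\in\ker\pi_*$ directly, whereas you supply the extra detail identifying the components with cosets of $\langle\sigma\rangle$---but the underlying idea is the same.
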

{\it Proof.} If $d$ is prime, then $k=1$ and thus $g\in\ker\pi_*$ in the proof of the original Lemma \ref{lconnect}.

If $\pi$ is Galois, then its group of deck transformations acts transitively. In particular, it acts transitively on the connected components $s_1,\ldots,s_k$ of $|b|$ and on the summands of the respective decomposition $b=b_1+\ldots+b_k,\, |b_i|=s_i$. Thus the push-forward $\pi_*b_i$ does not depend on $i$, and we have $b_i-b_j\in\ker\pi_*$ for all $i$ and $j$. This implies $b=k\cdot b_1\;{\rm mod}\;\ker\pi_*$. In particular, we have $k=1$, because $b$ is weakly primitive in $H_1(X)/\ker \pi_*$.
\hfill $\square$\medskip

In the target case of algebraic groups, the observations of this section simplify as follows.
\begin{sledst}
Let $\pi:X\to Y$ be a covering of connected Lie groups.
Let $V\subset Y$ be path-connected. 
If $V$ contains a singular 1-cycle $c$ whose preimage $\pi^*(c)$ represents a weakly primitive element in $H_1(X)$, then the preimage $U=\pi^{-1}(V)$ is path-connected.
\end{sledst} 
{\it Proof.} At the level of fundamental groups, the map $\pi_1(X)\to\pi_1(Y)$ is injective for any connected covering $\pi:X\to Y$. Moreover, since the fundamental groups of Lie groups are commutative, we have $\pi_1(Y)=H_1(Y)$, and the diagram \eqref{eq:comdiag}
ensures that $\pi_*:H_1(X)\to H_1(Y)$ is injective as well. In this case $\ker \pi_*$ is trivial, and the statement follows from Lemma \ref{thindir2weak}, because a covering of Lie groups is Galois. 
\hfill $\square$\medskip

\begin{rem}
In the present work, $X$ and $Y$ will be complex tori, so the absence of torsion in their homology identifies primitivity with weak primitivity. However, for general algebraic groups, the torsion (and thus the difference between the two notions of primitivity) is non-trivial and important for their geometry.
\end{rem}

\subsection{Inductive connectivity.}

Our aim here is to create a context in which we could apply the results of the preceding subsection to a chain of covering maps $\ldots\to X_2\to X_1\to Y$ to prove by induction on $k$ that the preimage of a certain path-connected $V\subset Y$ in $X_k$ is path-connected. In this way we shall prove Theorem \ref{topolth0}. 

\begin{theor}\label{thind}
Let $\pi:X\to Y$ be an inductive covering, and let $U\subset X$ be the preimage of a subset $V\subset Y$, then the image of $\h{U}$ in $\h{X}$ is the intersection of $\h{X}\subset\h{Y}$ with the image of $\h{V}$ in $\h{Y}$. 

\end{theor}

\begin{proof} 
Observe first that we can treat each path-connected component of ${V}$ separately and therefore assume that ${V}$ is path-connected. In view of the commutative diagram
$$
\begin{tikzcd}
H_1({U}) \arrow[r,symbol=\longrightarrow] \arrow[d,symbol=\longrightarrow] & H_1(X) \arrow[d,symbol=\hookrightarrow] \\
H_1({V}) \arrow[r,symbol=\longrightarrow] & H_1(Y) 
\end{tikzcd}
$$
the image of $\h{{U}}$ in $\h{X}$ is contained in the intersection of $\h{X}\subset\h{Y}$ with the image of $\h{{V}}$ in $\h{Y}$. Since ${V}$ is path-connected by assumption, the containment in the opposite direction is equivalent to showing that, for any loop $c$ in $X$ pointed at $x\in {U}$ such that  the homology class $\left[\pi_*(c)\right]$ belongs to the image of $\h{{V}}$ in $\h{Y}$, the homology class $\left[c\right]$ belongs to the image of $\h{{U}}$ in $\h{X}$.

For $c$ as above, the loop $\pi_*(c)$ in $Y$ is homotopic to the product of a loop $b$ in ${V}$ (representing the same element in the image of $\h{{V}}$) and a loop $h$ representing an element of the commutator $\pi'_1(Y)$. By the inductivity of the covering, $h$ is the image of a loop $g$ representing an element of the commutator $\pi'_1(X)$. The homotopy between the loops $\pi_*(c)h^{-1}$ and $b$ lifts to the homotopy between the loop $cg^{-1}$ and a certain loop $\gamma$ in ${U}$ covering $b$. Since $g$ belongs to $\pi'_1(X)$, it follows that $\left[c\right]= \left[cg^{-1}\right]= \left[\gamma\right]$ belongs to the image of $\h{{U}}$ in $\h{X}$. 
\end{proof}

In particular, for every subgroup $L\subset \h{X}$, this theorem implies the following. 
\begin{sledst}\label{corH+L}
Let $\pi:X\to Y$ be an inductive covering, and let the subset  ${U}\xhookrightarrow{i}X$ be the preimage of a subset  ${V}\xhookrightarrow{j}Y$.
If $j_*\h{{V}}+\pi_* L$ generates $\h{Y}$, 
then $i_*\h{{U}}+L$ generates $\h{X}$.
\begin{proof}
By assumption, any element $x \in \pi_*\h{X}$ can be written as a sum $x=v+\ell$ with $v\in j_*\h{{V}}$ and $\ell \in \pi_*L$. In particular, we have that $v=x-\ell$ is an element of $\pi_*\h{X}$. By Theorem \ref{thind}, $v$ is an element in $i_*\big(\pi_*\h{{U}}\big)$. It follows that any element in $\h{X}$ is the sum of an element in $i_*\h{{U}}$ with an element in $L$.
\end{proof}

\end{sledst}
This motivates the following notion.
\begin{defin}\label{definductive} A path-connected $V\subset Y$ is said to be \emph{$L$-inductively connected} (or just \emph{inductively connected} for $L=0$) for a subgroup $L\subset \h{Y}$, if the image of $\h{V}$ together with $L$ generates $\h{Y}$.
\end{defin}

\subsection{Proof of Theorem \ref{topolth0}.}
We have reached the goal stated at the beginning of the preceding subsection.

\begin{theor}\label{corolmain} 1) Assume that $\pi:X\to Y$ is a strongly inductive covering of finite degree,

and that $V\xhookrightarrow{j}Y$ is $(\pi_* L)$-inductively connected for some subgroup $L\subset \h{X}$. 

Then, the preimage $U:=\pi^{-1}(V)\xhookrightarrow{i}X$ is $L$-inductively connected.

2) In particular, if $j_*\h{V}+\pi_*(\h{X})=\h{Y}$, then $U$ is path-connected.

3) Conversely, if $j_*\h{V}+\pi_* (\h{X})\ne \h{Y}$, then $U$ is  path-disconnected. 
\end{theor}
\begin{proof} 
We first prove 1) for a covering of prime degree $p>1$. Since $\pi_* : \h{X} \rightarrow \h{Y}$ is injective and restricts to an isomorphism on the torsion part, we can find two minimal sets of generators $a_1,\dots, a_k$ and  $b_1,\dots, b_k$ of $\h{X}$ and $\h{Y}$ respectively such that $a_1$ and $b_1$ are not torsion elements and such that $\pi_*$ is given by 
$$\pi_*(a_1)=p\cdot b_1  \text{ and } \pi_*(a_j)= b_j \text{ for } j\geq 2.$$

In particular, the following conditions are equivalent for a singular $1$-cycle $c$ in $V$:

-- $\left[p\cdot c\right]$ is a primitive element in $\pi_*\h{X}$; 

-- $[c]$ is primitive and not contained in $\pi_*\h{X}$.

To see this, write $[c]=(\lambda_1,\cdots,\lambda_k)$ in the coordinates provided by $b_1,\cdots,b_k$ and assume that $b_1, \cdots,b_\ell$  are the non-torsion elements among them ($\ell\leq k$). On the one hand, the class $[c]$ is primitive and not in $\pi_*\h{X}$ if and only if the vector $(\lambda_1,\cdots, \lambda_\ell)$ is primitive and $\lambda_1$ is not divisible by $p$. On the other hand, the class $[p\cdot c]$ is primitive in $\pi_*\h{X}$ if and only if the vector $(\lambda_1, p\lambda_2, \cdots, p \lambda_\ell)$ is primitive. The two properties are therefore equivalent. 

Since $j_*\h{V}+\pi_*(L)$ generates $\h{Y}$, and $\pi_*(L)\subset \pi_*(\h{X})$, there exists at least one cycle $c$ that satisfies one of the two equivalent conditions above. It follows from Lemma \ref{thindir2} that $U$ is path-connected and from Corollary \ref{corH+L} that $i_*\h{U}+ L$ generates $\h{X}$. 
The subvariety $U\subset X$ is therefore $L$-inductively connected.

Now we can prove 1) for a covering of an arbitrary degree.

We first show that $\pi$ can be written as a composition of strongly inductive coverings whose respective degrees are prime numbers. Indeed, the inclusion of lattices $H_1(X)/H^T_1(X)\subset H_1(Y)/H^T_1(Y)$ admits the Smith normal form and thus extends to an increasing filtration of lattices $L_i$, such that the index of every two consecutive lattices $L_i$ and $L_{i+1}$ is prime. Since path-connected coverings $\tilde \pi : \widetilde X\to Y$ such that $\pi$ factorizes through $\tilde \pi$ are in correspondence with the subgroups $G<\pi_1(Y)$ containing $\pi_*(\pi_1(X))$ (see \cite[Theorem 1.38]{Hatcher}), the filtration of lattices $L_i$ gives rise to a decomposition of the covering $\pi$ into a sequence of spaces $X_i$ and covering spaces between them. These are strongly inductive by Example \ref{exastrongind}.4. The degrees of these coverings are prime, because the degree of a strongly inductive $\tilde\pi$, which is given by the index of $\pi_*(\pi_1(X))$ in $\pi_1(Y)$, equals the index of $H_1(X)$ in $H_1(Y)$, and further the index of $H_1(X)/H^T_1(X)$ in $H_1(Y)/H^T_1(Y)$.

Now, for every intermediate covering $X\xrightarrow{\pi_i} X_i\xrightarrow{\tilde\pi_i} Y$ we can deduce by induction on $i$ that the set $\tilde\pi_i^{-1}(V)\subset X_i$ is $\pi_{i*}(L)$-inductively connected. The step of the induction is the statement 1) of the theorem for the prime degree covering $X_{i+1}\to X_i$, which we have already proved.

The statement 2) is a special case of 1) with $L=\h{X}$. For the statement 3), 

the subgroup $j_*\h{V}+\pi_*(\h{X})$ is contained in a strict subgroup of $\h{Y}$. In particular, the subgroup generated by $\pi_1(V)$ and $\pi_*(\pi_1(X))$ in $\pi_1(Y)$ is contained in a strict subgroup $G\subset \pi_1(Y)$. Therefore, the covering $\pi$ factors through the strongly inductive covering $\tilde \pi : \widetilde X \rightarrow Y$ associated to $G$ with the property that $j_*\h{V}+\pi_*(\h{X}) \subset \tilde \pi_*(\h{\widetilde X})$.

Assuming to the contradiction that $\widetilde{U}:=\tilde\pi^{-1}(V)$ is path-connected, we can connect two of the preimages of $y\in V$ through $\widetilde{U}$ with a path $\gamma$. We claim that the loop $\tilde{\pi}(\gamma)$ represents a cycle in $\h{V}$ outside $\tilde\pi_* (\h{\widetilde X})$, leading to a contradiction. Thus, the set $\tilde\pi^{-1}(V)$ is not path-connected and so is $\pi^{-1}(V)$, as $\pi$ factors through $\tilde\pi$.

It remains to prove the above claim. Assume to the contradiction that $\tilde{\pi}(\gamma)$ represents a cycle in $\tilde\pi_* (\h{\widetilde X})$. Then, the loop $\tilde{\pi}(\gamma)$ is homotopic to the product $a\cdot b$ of a loop $a \in \tilde \pi(\pi_1(X))$ with a commutator $b\in \pi_1'(Y)$, both based at $y$. Since $a \in \tilde \pi(\pi_1(X))$ and $\tilde \pi$ is inductive, we can lift $a$ and $b$ to loops $c$ and $d$ based at the starting point $x$ of $\gamma$. Therefore, the homotopy between $a\cdot b$ and  $\tilde{\pi}(\gamma)$ lifts to an homotopy between $c\cdot d$ and a loop based at $x$ and covering $\tilde{\pi}(\gamma)$ once. Since there is only one lift of $\tilde{\pi}(\gamma)$ starting at $x$, it follows that $\gamma$ is a loop. This is a contradiction.
\end{proof}

\subsection{Powers of enumerative problems.} 
We shall prove Theorem \ref{mainth} by applying Theorem \ref{topolth0} to a  \emph{fiber power} of the given enumerative problem $U\subset T\times C$: let $U^{(k)}$ be the space of tuples $(x_1,\ldots,x_k,f)\in T^k\times C$ such that $x_1,\ldots,x_k$ are pairwise distinct solutions in $c^{-1}(f)$. If the projection $U\to C$ is a $d$-fold covering, then $U^{(k)}$ is a smooth algebraic set in $T^k\times C$, and the projection $U^{(k)}\to C$ is 
a $\frac{d!}{(d-k)!}$-fold covering. We need the following well known observation (the case $k=2$ is especially important, see e.g. \cite{Harris} and \cite{schsot}).

\begin{observ}\label{exatransit}
1) The monodromy group of $U\to C$ is  $k$-transitive (i.e. capable of sending any given $k$-tuple of solutions to any other given $k$-tuple) if and only if the monodromy group of $U^{(k)}\to C$ is transitive, i.e. $U^{(k)}$ is connected.

2) In particular, the monodromy group of $U\to C$ equals the symmetric group $\sym_d$ if and only if the monodromy group of $U^{(d)}\to C$ is transitive, i.e. $U^{(d)}$ is connected.
\end{observ}

\smallskip
\subsection{Proof of Theorem \ref{mainth}.}\label{proofofmainth}

The inclusion $\tG \subset D\wr G$ makes the sought equality $\tG =D\wr G$ equivalent to the numerical one $|\tG|=|D\wr G|$ and, furthermore to 
\begin{equation}\label{eq:numerical}
    \big|(D\wr \sym_d)/\tG\big|=|\sym_d/G|,
\end{equation}
since the right hand side equals $\big|(D\wr \sym_d)/(D\wr G)\big|$.

In order to prove \eqref{eq:numerical}, we will interpret the two sides of this equality as the numbers of connected components of certain enumerative problems, derived from the initial one.
Namely, denote the projections of $U\subset T\times C$ to $T$ and $C$ by $t$ and $c$ respectively, and consider the following objects:

-- the smooth algebraic set $V:=U^{(d)}\subset T^d\times C$, that is the set of all tuples $(x_1,\ldots,x_d,f)$ such that $x_1,\ldots,x_d$ are the (arbitrarily ordered) points of the fiber $c^{-1}(f)$. 

-- the covering $\pi:\tT^d\times C\to T^d\times C$;

-- the preimage $\tV=\pi^{-1}(V)$, that is the set of all tuples $(y_1,\ldots,y_d,f)$ such that $y_1,\ldots,y_d$ are $d$ points of the fiber $\phi^{-1}\circ c^{-1}(f)$ such that $\phi(y_1),\ldots,\phi(y_d)$ are pairwise distinct.

Choosing a base point $f\in C$ with fiber $S:=c^{-1}(f)$, a ranking of this fiber $\delta:S\to\{1,2,\ldots,d\}$ defines 

-- the base point $f_\delta:=(\delta^{-1}(1),\ldots,\delta^{-1}(d),f)$ in $V$, 

-- the component of $V$ containing $f_\delta$ (we denote it by $V_\delta$),

-- the natural action of the symmetric group $\sym_d$ on $S$, its induced action on the fiber of the covering $V\to C$, and the induced embedding of the monodromy group $G\subset \sym_d$.

The regular action of the group $\sym_d$ on a fiber of the covering $V\to C$ restricts to the natural action of the monodromy group $G\subset \sym_d$ and the orbits of $G$ are in correspondence with the components of $V$. This is because, by the definition of the action, two points of the fiber are in the same orbit if and only if they can be connected with a path in $V$. In particular, $V$ has $|\sym_d/G|$ components.

Similarly, the regular action of $D\wr \sym_d$ on a fiber of the covering $\tV\to C$ restricts to the natural action of the monodromy group $\tG\subset D\wr \sym_d$, and the orbits of the latter are in correspondence with the components of $\tV$. In particular, $V$ has $\big|(D\wr \sym_d)/\tG\big|$ components.

As a result, the equality \eqref{eq:numerical} is equivalent to the fact that every component $V_\delta$ of $V$ is covered with a unique component of  $\tV$. 
The latter is equivalent to \eqref{eq:solutionlattice} by Theorem \ref{corolmain}, applied to the covering $\pi$ and the set $V_\delta$. This application is justified by the following observations:

-- The covering $\pi$ is (strongly) inductive if and only if the covering $\phi$ is;

-- Denoting the projections of $V$ to $T^d$ and $C$ by $t$ and $c$ respectively, $c_*$ surjectively maps $\pi_1(V_\delta,f_\delta)$ to the kernel of the monodromy map $\pi_1(C)\rightarrow \sym(S)$. Thus the map $\gamma\mapsto\gamma_\delta$ from the definition of the solution lattice decomposes into lifting the loop $\gamma$ to $\pi_1(V_\delta,f_\delta)$ and then mapping the homology class of the lifted loop with  $t_*:H_1(V_\delta)\to H_1(T^d)$. In particular, 
the solution lattice equals $t_*\h{V_\delta}$.

As a result, the equality \eqref{eq:solutionlattice} is equivalent to the equality $\h{T^d\times C}^d=\h{V_\delta}+\pi_*\h{\tT^d\times C}$ that appears in Theorem \ref{corolmain}.

\section{Galois groups of systems of sparse polynomial equations.}\label{sec:iiss}

In this section, we state and prove a generalization of Theorem \ref{thnonred0} that in particular

-- completely characterizes tuples $A_1=\ldots=A_n\subset\Z^n$ such that the Galois group $G_A$ of the corresponding system of sparse equations equals the expected wreath product;

-- extends this result to tuples of support sets that are not equal but similar in a sense (see the notion of analogous sets below).

As in Subsections \ref{ssparsesys}--\ref{ssparsesysunexp}, the tuple of finite sets $A:=(A_1,\ldots,A_n)$ in the character lattice $\Z^n$ of the complex torus $T:=\CC^n$ is assumed to be reduced and irreducible, see Definition \ref{def:redirred}. The mixed volume of the convex hulls of $A_1,\ldots,A_n$ is denoted by $d$.

\subsection{Analogous systems of equations}

For a linear function $\gamma:\Z^n\to\Z$ and a finite set $P\subset\Z^n$, let $P^\gamma\subset P$ be the set of all points where $\gamma$ attains its maximal value on $P$.
\begin{defin} \label{defanalogous}
We say that the sets $A_1,\ldots,A_n$ are \emph{analogous} if, for every $\gamma\in(\Z^n)^*$, there exists a vector subspace $V_\gamma\subset\R^n$ such that for all $i=1,\dots,n$, the minimal affine subspace containing $A_i^\gamma$ is a shifted copy of $V_\gamma$. 

\end{defin}
Equivalently, the sets $A_1,\ldots,A_n$ are analogous if the convex hulls of  $A_1,\ldots,A_n$ share the same dual fan, see \cite[Section 1.5]{Ful}.
\begin{exa} If the convex hulls of $A_1,\ldots,A_n$ are equal, or more generally homothetic, then $A_1,\ldots,A_n$ are analogous.
\end{exa}
\begin{rem} A tuple of analogous sets of full dimension is always irreducible. A tuple is analogous if and only if its reduction is analogous, see Remark \ref{rem:reduction}.
\end{rem}
Let $\mathcal{G}_A\subset (\Z^n)^*$ be the (finite) set of all primitive $\gamma$ such that $V_\gamma$ is a hyperplane, and let $d_\gamma$ be the index in $V_\gamma$ of the minimal sublattice to which each of $A_1^\gamma,\ldots,A_n^\gamma$ can be shifted.
\begin{defin} Let $\tilde A$ be an analogous tuple in a lattice $\tN$ with reduction $A$ (see Subsection \ref{ssparsesysexp}) given by a linear embedding $L:N\to\tN$ and denote by $L^*$ the dual embedding. We say that $\tilde A$ is \emph{ample} if the vectors $d_\gamma\cdot\gamma \in N^*$, $\gamma\in\mathcal{G}_{A}$, together with the lattice $\im(L^*)$ generate $N^*$. 
\end{defin}
In particular, a reduced analogous tuple $A$ is always ample, since $\im(L^*)=N^*$. Note that the property of being ample does not depend on the choice of the reduction.
\begin{theor} \label{thintro} Let $\tilde A:= (\tilde A_1,\ldots,\tilde A_n)$ be a tuple of finite sets in a lattice $\tN$ such that $0\in A_i$ and denote by $\Lambda\subset \tN$ the sublattice generated by the sets $\tilde A_i$. Denote also by $\tilde d$ the mixed volume of the convex hulls of these sets and let $d:=\tilde d/|\Z^n/\Lambda|$.

1)  Assume that $\tilde A_1,\ldots,\tilde A_n$ are analogous. Then, the monodromy group $G_{\tilde A}$ of the system of equations with indeterminate coefficients supported at $\tilde A$ is isomorphic to $(\tN/\Lambda) \wr \sym_d$ if $\tilde A$ is ample, and is strictly smaller otherwise.

2) Assume that $\tilde A$ has a reduction $A:=(A_1,\ldots,A_n)$ such that every $A_i$ is contained in the positive quadrant $\Z^n_{\geqslant 0}$ of $N\simeq\Z^n$ and contains the vertices of the standard simplex (i.e. $\C^{A_i}$ consists of non-Laurent polynomials and contains the space of affine linear functions). Then, the monodromy group $G_{\tilde A}$ is isomorphic to $(\tN/\Lambda) \wr \sym_d$.
\end{theor}

The proof essentially repeats Theorem \ref{thdim1nonred}: we prove the multidimensional version of Lemma \ref{ldim1nonred} (see Theorem \ref{thh1} below) claiming that the solution lattice of the $A$-enumerative problem is sufficiently large, and then apply Theorem \ref{mainth}. However, the trinomial deformation of a given polynomial equation from the proof of Lemma \ref{ldim1nonred} becomes drastically more complicated in the multidimensional setting, and its construction occupies most of this section. It is based on geometry of $A$-resultants and $A$-discriminants.

\smallskip
\subsection{Resultants.}\label{sec:res} 
The 
first homology group $H:=\h{T}$ is the lattice dual to $\Z^n$: the composition of a loop $S^1\to\CC^n$ representing a cycle $\gamma\in H$ and a monomial $m:\CC^n\to\CC^1$ is a map $S^1\to \CC^1$, and its class $r\in\pi_1\CC^1=\Z$ defines the natural non-degenerate pairing $H\times\Z^n\to\Z,\,(\gamma,  m) \mapsto \gamma \cdot  m=r$. 

Every $\gamma\in H$, considered as a linear function on $\Z^n$ attains its maximum on $A_i$ at some subset that we denote by $A^\gamma_i$. For short, the tuples $(A_1,\ldots,A_n)$ and $(A^\gamma_1,\ldots,A^\gamma_n)$ will be denoted by $A$ and $A^\gamma$ respectively, and the spaces of systems of equations $\C^{A_1}\oplus\ldots\oplus\C^{A_n}$ and $\C^{A^\gamma_1}\oplus\ldots\oplus\C^{A^\gamma_n}$ supported at these tuples  -- by $\C^A$ and $\C^{A^\gamma}$.

The \emph{reduced resultant} $R^{red}_{A^\gamma}$ is the closure of the set of all tuples $g=(g_1,\ldots,g_n)\in\C^{A^\gamma}$ such that the system $g_1(x)=\ldots=g_n(x)=0$ has a root $x\in\CC^n$. All $A^\gamma_i$ by definition can be shifted to the hyperplane $\ker \gamma$ (where $\gamma$ is considered as a linear function on $\Z^n$), so the set of solutions of $g_j=0$
is invariant under the action of the 1-dimensional subtorus $T_\gamma\subset\CC^n$ whose homology embeds in $H$ as $\Z\cdot\gamma$. 

\begin{defin} A primitive vector $\gamma\in H$ is said to be \emph{essential}, if the tuple $A^\gamma$ does not contain $k\geqslant 2$ sets that can be shifted to the same $(k-2)$-dimensional plane. The set of essential vectors will be denoted by $\mathcal{G}$.

\end{defin}
\begin{rem}
The set $\mathcal{G}$ is finite: in particular, it is contained in the set of all primitive exterior normal vectors to the facets of the convex hull of $A_1+\ldots+A_n$.
\end{rem}

The resultant $R^{red}_{A^\gamma}$ is an irreducible hypersurface (see \cite{S94}) if and only if $\gamma$ is proportional to an essential vector. Then the dimension count shows that, for a generic tuple $g\in R^{red}_{A^\gamma}$, the zero locus $\{g=0\}$ is one-dimensional. Thus its quotient by the torus $T_\gamma$ is a finite set, whose cardinality will be denoted by $d_\gamma$. This number should be regarded as a natural multiplicity of the resultant $R^{red}_{A^\gamma}$, and will be explicitly computed in Theorem \ref{thm:degres} below. 

\begin{defin}[\cite{dcg}] 1) If $\gamma$ is essential, then the \emph{algebraic resultant} $R_{A^\gamma}$ is defined as $F^{d_\gamma}$, where $F$ is the irreducible polynomial defining the hypersurface $R^{red}_{A^\gamma}$, and the number $d_\gamma$ is the \emph{resultant multiplicity}, defined above.

2) If $\gamma$ is not proportional to an essential vector, then by definition the algebraic resultant $R_{A^\gamma}$ is 1, and its multiplicity $d_\gamma$ equals 0.
\end{defin}

The computation of the resultant multiplicty $d_\gamma$ for an essential vector $\gamma$ is based on the following observation from \cite{S94}. The set $I$ of subsets $K\subset\{1,\ldots,n\}$ such that all $A^\gamma_i,\, i\in K$, can be shifted to the same $(|K|-1)$-dimensional sublattice, has a unique element which is minimal with respect to inclusion. Indeed, the set $I$ is not empty since it contains $\{1,\ldots,n\}$, and the intersection of two elements of $I$ necessarily belongs to $I$, otherwise their union would certify that $\gamma$ is not essential (see \cite{S94} for details). The minimal element in $I$ will be denoted by $K_\gamma$, and the minimal sublattice to which all $A^\gamma_i,\, i\in K_\gamma$, can be shifted, will be denoted by $L_\gamma\subset\Z^n$. 

Let $d'_\gamma$ be the index of the lattice $L_\gamma$ in its saturation $\bar L_\gamma$. The images of the sets $A^\gamma_i,\, i\notin K_\gamma$, under the projection $\Z^n\to\Z^n/\bar L_\gamma$ are $n-|K_\gamma|+1$ sets in a lattice of dimension $n-|K_\gamma|+1$. Thus, the lattice mixed volume of the convex hulls of these images makes sense and is denoted by $d''_\gamma$. 
\begin{theor}[Theorem 2.23 in \cite{mian}]
\label{thm:degres} 1) The resultant multiplicity $d_\gamma$ equals $d'_\gamma\cdot d''_\gamma$. 

2) For a generic tuple $g\in R^{red}_{A^\gamma}$, among the differentials $dg_1,\ldots,dg_n$ at a point of the zero locus $\{g=0\}$, the only linearly dependent subtuple is $dg_i,\, i\in K_\gamma$, and its corank is $1$ (i.e. they satisfy a unique non-trivial linear relation).
\end{theor}

\begin{defin} The tuple $(E_1,\ldots,E_n)$ such that $E_k:=\varnothing$ for $k\notin K_\gamma$ and $E_k:=A^\gamma_k$ otherwise, is said to be the \emph{essential tuple} defined by an essential vector $\gamma$.

We denote by $A^\gamma_{ess}$ the essential tuple defined by $\gamma\in\mathcal{G}$, denote the set of all essential tuples by $\mathcal{E}$, and denote by $\mathcal{E}_0\subset\mathcal{E}$ the set of tuples $(E_1,\ldots,E_n)\in\mathcal{E}$ such that the convex hull of  every $E_i$ has dimension $(n-1)$.
\end{defin}
Note that different vectors $\gamma\in\mathcal{G}$ may give rise to the same essential tuple.
\begin{exa} For $A_1$ and $A_2$ as on the picture below, both $(-1,0)$ and $(0,-1)$ belong to $\mathcal{G}$ and give the same essential tuple $(\varnothing, $ the bottom-left point of $A_2)$.
\end{exa}
\begin{center}
%% Creator: Inkscape inkscape 0.91, www.inkscape.org
%% PDF/EPS/PS + LaTeX output extension by Johan Engelen, 2010
%% Accompanies image file '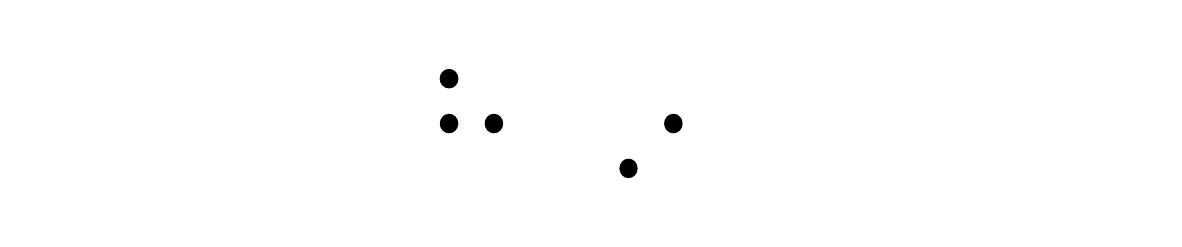' (pdf, eps, ps)
%%
%% To include the image in your LaTeX document, write
%%   \input{<filename>.pdf_tex}
%%  instead of
%%   \includegraphics{<filename>.pdf}
%% To scale the image, write
%%   \def\svgwidth{<desired width>}
%%   \input{<filename>.pdf_tex}
%%  instead of
%%   \includegraphics[width=<desired width>]{<filename>.pdf}
%%
%% Images with a different path to the parent latex file can
%% be accessed with the `import' package (which may need to be
%% installed) using
%%   \usepackage{import}
%% in the preamble, and then including the image with
%%   \import{<path to file>}{<filename>.pdf_tex}
%% Alternatively, one can specify
%%   \graphicspath{{<path to file>/}}
%% 
%% For more information, please see info/svg-inkscape on CTAN:
%%   http://tug.ctan.org/tex-archive/info/svg-inkscape
%%
\begingroup%
  \makeatletter%
  \providecommand\color[2][]{%
    \errmessage{(Inkscape) Color is used for the text in Inkscape, but the package 'color.sty' is not loaded}%
    \renewcommand\color[2][]{}%
  }%
  \providecommand\transparent[1]{%
    \errmessage{(Inkscape) Transparency is used (non-zero) for the text in Inkscape, but the package 'transparent.sty' is not loaded}%
    \renewcommand\transparent[1]{}%
  }%
  \providecommand\rotatebox[2]{#2}%
  \ifx\svgwidth\undefined%
    \setlength{\unitlength}{340.15748031bp}%
    \ifx\svgscale\undefined%
      \relax%
    \else%
      \setlength{\unitlength}{\unitlength * \real{\svgscale}}%
    \fi%
  \else%
    \setlength{\unitlength}{\svgwidth}%
  \fi%
  \global\let\svgwidth\undefined%
  \global\let\svgscale\undefined%
  \makeatother%
  \begin{picture}(1,0.20833333)%
    \put(0,0){\includegraphics[width=\unitlength,page=1]{noness.pdf}}%
    \put(0.36665969,0.0220518){\color[rgb]{0,0,0}\makebox(0,0)[lb]{\smash{$A_1$}}}%
    \put(0.55510584,0.02333071){\color[rgb]{0,0,0}\makebox(0,0)[lb]{\smash{$A_2$}}}%
    \put(0,0){\includegraphics[width=\unitlength,page=2]{noness.pdf}}%
  \end{picture}%
\endgroup%

\end{center}

\begin{rem}\label{remanalogous}
1) A vector $\gamma \in \mathcal{G}$ satisfies $A^\gamma_{ess}\in \mathcal{E}_0$ if and only if there exists a facet in every $A_i$ whose primitive normal exterior vector is $\gamma$.

2) The map $\mathcal{G}\to\mathcal{E},\, \gamma\mapsto A^\gamma_{ess}$, is one to one over $\mathcal{E}_0$, i.e. every essential tuple $E\in\mathcal{E}_0$ is defined by a unique $\gamma\in\mathcal{G}$, which we denote by $\gamma_E$.

3) If $A_1,\ldots,A_n$ are analogous (Definition \ref{defanalogous}), then $\mathcal{E}_0=\mathcal{E}$, and $A^\gamma_{ess}=A^\gamma$. As a consequence, 
$\mathcal{G}$ and $\mathcal{E}$ are in one to one correspondence.
\end{rem}

By a harmless abuse of notation, we denote the lift of the algebraic discriminant $R_{A^\gamma}$ under the natural forgetful projection $\C^A\to\C^{A^\gamma}$ by the same symbol $R_{A^\gamma}$. Let $\mathcal{R}$ be the set of all hypersurfaces of the form $R_{A^\gamma}=0$ in $\C^A$.
\begin{rem}\label{remanalogous2}
The terminology ``essential tuple'' is motivated by the fact that, by construction, two equations $R_{A^\gamma}=0$ and $R_{A^{\gamma'}}=0$ define the same set in $\C^A$ if and only if $\gamma$ and $\gamma'$ define the same essential tuple. Thus, $\mathcal{R}$ is in one to one correspondence with the set of essential tuples $\mathcal{E}$ (and, by Remark \ref{remanalogous}, also with $\mathcal{G}$ when $A$ is analogous). For $E\in\mathcal{E}$, denote the corresponding resultant set in $\C^A$ by $R^{red}_E\in\mathcal{R}$.
\end{rem}

\smallskip
\subsection{The main result.} 
We consider a reduced irreducible tuple $A:=(A_1,\ldots,A_n)$ of finite sets in $\Z^n$. The general system of equations supported at this tuple has Galois group $\sym_d$ (where $d$ is the generic number of roots of the system, i.e. the lattice mixed volume of the convex hulls of $A_i$). 

However, if this system of equations undergoes a monomial change of coordinates corresponding to a proper sublattice $L\subset H$, then the Galois group of the resulting non-reduced system is not symmetric.
In order to analyse this Galois group $G_{L,A}$ using Theorem \ref{mainth}, we should check whether 
\begin{equation}\label{eq:inductiveconn}
    H^{\oplus d}=L^{\oplus d}+H_\delta,
\end{equation}
or, in other words, whether the solution space of the initial system of equations is $L^{\oplus d}$-inductively connected.

Recall the definition of the solution lattice $H_\delta$ in this context. Pick a generic system of equations $f_\circ$ in the space $\C^A$, and order its roots: $\delta:\{x\,|\, f_\circ(x)=0\}\to\{1,\ldots,d\}$. Every loop $\alpha$ in $\C^A$ pointed at $f_\circ$ defines a permutation of the roots. If this permutation is trivial, then, as the system of equations travels along the loop, its $i$-th root (with respect to the order $\delta$) travels along a loop $\alpha_i$ in the torus $T$, and the homology classes $\tilde\alpha_i$ of these loops define an element $\tilde\alpha_\delta\in H^{\oplus d}$. The set of all such elements is the solution lattice $H_\delta$.

We shall prove the following criterion of whether the equality \eqref{eq:inductiveconn} holds for given $A$ and $L$. For an essential tuple $B\in\mathcal{E}$, define $\mathcal{G}_B$ to be the set of all vectors $\gamma\in\mathcal{G}$ supporting this essential tuple: $A_{ess}^\gamma=B$. 
\begin{theor}\label{thh1} 
1) Assume that 
the vectors $\sum_{\gamma\in\mathcal{G}_B} d_\gamma\cdot\gamma$ over all $B\in\mathcal{E}$ together with $L$ do not generate the lattice $H$. Then \eqref{eq:inductiveconn} is not satisfied, and the Galois group $G_{L,A}$ is strictly smaller than the wreath product $(H/L)\wr\sym_d$.

2) Assume that 
the vectors $d_{\gamma_B}\cdot\gamma_B$ over all $B\in\mathcal{E}_0$ together with $L$ generate the lattice $H$. Then $\eqref{eq:inductiveconn}$ is satisfied, and the Galois group $G_{L,A}$ is isomorphic to the wreath product $(H/L)\wr\sym_d$.

\end{theor}
\begin{rem}\label{remincr} 1) 
If the tuple $A$ is analogous, then Remark \ref{remanalogous} assures that the set of vectors from \ref{thh1}.1) coincides with that of \ref{thh1}.2), i.e. $\sum_{\gamma\in\mathcal{G}_B} d_\gamma\cdot\gamma=d_{\gamma_B}\cdot\gamma_B$. Thus, the theorem completely characterizes analogous tuples satisfying the equality \eqref{eq:inductiveconn}.

2) In general, we have the following three increasing 
classes of tuples, which coincide for analogous tuples:

(a) Tuples $A$, such that the vectors  $d_{\gamma_B}\cdot{\gamma_B}$ over all $B\in\mathcal{E}_0$, generate the lattice;

(b) Tuples satisfying the equality \eqref{eq:inductiveconn};

(c) Tuples $A$, such that the vectors $\sum_{\gamma\in\mathcal{G}_B} d_\gamma\cdot\gamma$ over all $B\in\mathcal{E}$ generate the lattice.

We expect that, for general reduced irreducible tuples, (b) is strictly larger than (a). Regarding the comparison of (b) and (c), see the subsequent Remark \ref{remincr2}.
\end{rem}
\smallskip
\subsection{Preliminaries from lattice geometry.}

In order to prove Theorem \ref{thh1}.2, we aim at constructing enough loops $\alpha$ to generate $H^{\oplus d}/L^{\oplus d}$ with the respective elements $\tilde\alpha_\sigma$. For this purpose, the following obvious combinatorial fact will be useful. 

\begin{defin} For an element $u=(u_1,\ldots,u_d)\in H^{\oplus d}$, the sum $u_1+\ldots+u_d\in H$ is denoted by $\sum u$. The element $u$ is said to be \emph{homogeneous} if all of its non-zero entries are equal to each other.
\end{defin}

\begin{lemma}\label{lhomog} I. Let $U\subset H^{\oplus d}$ be a subset of homogeneous elements such that the vectors $\sum u$ over all $u\in U$ do not generate $H/L$. Then, the set $\sym_d\cdot U$ (in the sense of the natural action of the permutation group $\sym_d$ on the $d$ direct summands of $H^{\oplus d}$) does not generate the space $H^{\oplus d}/L^{\oplus d}$.

II. Let $U\subset H^{\oplus d}$ be a subset of homogeneous elements satisfying the following:

1) for any $u:=(u_1,\ldots,u_d)\in U$ with a non-zero entry $u_i$, there exists $\tilde u:=(\tilde u_1,\ldots,\tilde u_d)\in U$ 

such that $\tilde u_j=u_i$ and $\tilde u_k=0$ for some indices $j$ and $k$, and

2) the vectors $\sum u$ over all $u\in U$ generate $H/L$.\\
Then, the set $\sym_d\cdot U$ generates the space $H^{\oplus d}/L^{\oplus d}$. 
\end{lemma}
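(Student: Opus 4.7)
The plan is to handle the two parts by exploiting the natural $\sym_d$-equivariant homomorphism $\Sigma : H^{\oplus d} \to H$ given by $\Sigma(h_1,\dots,h_d) = h_1+\dots+h_d$. Since $\Sigma(L^{\oplus d}) \subset L$, it descends to a surjection $\bar\Sigma : H^{\oplus d}/L^{\oplus d} \to H/L$.

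For Part I, I would simply observe that $\bar\Sigma$ is constant on $\sym_d$-orbits, so $\bar\Sigma$ sends the subgroup generated by $\sym_d \cdot U$ onto the subgroup of $H/L$ generated by $\{\sum u \mid u \in U\}$. By hypothesis that image is proper in $H/L$, forcing $\sym_d \cdot U$ to generate a proper subgroup of $H^{\oplus d}/L^{\oplus d}$.

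For Part II, let $G$ denote the subgroup of $H^{\oplus d}$ generated by $\sym_d \cdot U$. Each homogeneous element of $U$ has the form $v \cdot \mathbf{1}_I$ for some $v \in H$ and $I \subset \{1,\dots,d\}$. I would proceed in three steps. First, for any value $v$ appearing as a non-zero entry of some $u \in U$, condition (1) yields $\tilde u = v \cdot \mathbf{1}_{\tilde I} \in U$ with $0 < |\tilde I| < d$; picking two size-$|\tilde I|$ subsets of $\{1,\dots,d\}$ that differ in exactly one element and taking the difference of the corresponding $\sym_d$-translates of $\tilde u$, I get that $v\cdot(\mathbf{1}_{\{i\}} - \mathbf{1}_{\{j\}}) \in G$ for every pair $i \neq j$ (the swap being achievable from any starting pair via $\sym_d$). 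Second, for any $u = v \cdot \mathbf{1}_I \in U$, write
\[
u \;=\; |I|\cdot v \cdot \mathbf{1}_{\{1\}} \;+\; \sum_{i \in I} v\cdot(\mathbf{1}_{\{i\}} - \mathbf{1}_{\{1\}}),
\]
and deduce from the first step that $(\sum u) \cdot \mathbf{1}_{\{1\}} = |I|\cdot v \cdot \mathbf{1}_{\{1\}} \in G$. Third, hypothesis (2) lets me write any $h \in H$ as $h = \sum_{u} n_u (\sum u) + \ell$ with $\ell \in L$, so
\[
h \cdot \mathbf{1}_{\{1\}} \;=\; \sum_{u} n_u\, (\sum u)\cdot \mathbf{1}_{\{1\}} \;+\; \ell\cdot\mathbf{1}_{\{1\}} \;\in\; G + L^{\oplus d}.
\]
Applying $\sym_d$ gives $h \cdot \mathbf{1}_{\{k\}} \in G + L^{\oplus d}$ for every $k$, and since these generate $H^{\oplus d}$ additively, we conclude $G + L^{\oplus d} = H^{\oplus d}$.

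I expect the delicate step to be the first one: it is the only place where condition (1) is genuinely used, and the argument depends on the simple but crucial combinatorial fact that whenever $0 < |\tilde I| < d$ one can find size-$|\tilde I|$ subsets of $\{1,\dots,d\}$ that differ in a single element, together with $\sym_d$-transitivity on ordered pairs $(i,j)$ with $i\neq j$. The remaining steps are then purely formal manipulations inside $H^{\oplus d}/L^{\oplus d}$.
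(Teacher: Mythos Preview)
Your proposal is correct and follows essentially the same route as the paper. Part~I is identical in spirit (push forward along the coordinate-sum map), and for Part~II both you and the paper first use condition~(1) to manufacture the ``difference'' vectors $v\cdot(\mathbf 1_{\{i\}}-\mathbf 1_{\{j\}})$ from two $\sym_d$-translates of a homogeneous element with at least one zero entry, then subtract these from an arbitrary $u\in U$ to concentrate $\sum u$ in a single coordinate, and finally invoke condition~(2); your $\mathbf 1_I$ notation just makes the bookkeeping a bit more explicit than the paper's prose.
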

\begin{proof} In the setting of part I, the map $\sum$ sends $\sym_d\cdot U$ and $L^{\oplus d}$ to the proper sublattice of $H$ generated by $\sum  u,\, u\in U$, and $L$, thus $\sym_d\cdot U$ and $L^{\oplus d}$ also generate a proper sublattice of $H^{\oplus d}$.

In the setting of Part II, the assumption 1) allows to obtain, starting from a homogeneous element $u\in U$ with the nonzero entry $\delta$, the element in $\sym_d\cdot U$ of the form $(\delta,\ldots,\delta,0,\ldots,0)$ with at least one zero, then the element $(\delta,\ldots,\delta,0,\delta,0\ldots,0)$ with the same number of zeroes, then, by permuting the difference of the preceding two vectors, the element $(0,\ldots,0,\delta,0,\ldots,0,-\delta,0,\ldots,0)$ with $\delta$ and $-\delta$ at arbitrary positions, and finally, adding such elements to the initial $u$, the vector $(0,\ldots,0,\sum u,0,\ldots,0)$ with $\sum u$ at an arbitrary position. By the assumption 2), such vectors together with $L^{\oplus d}$ generate $H^{\oplus d}$.
\end{proof}
Note that Part II does not hold without the assumption 1. Eventually, for this reason, we shall need the following elementary geometric fact.
\begin{lemma}\label{lexcl} 
Under the assumptions of Theorem \ref{thh1}.2, assume that for some $B\in\mathcal{E}_0$ and the corresponding $\gamma:=\gamma_B$, every $j=1,\ldots,n$, and every point $a\in A_j\setminus A_j^\gamma$, we have $V(A)=h_a\cdot d_\gamma$ where $h_a:=|\gamma(a)-\gamma(A_j^\gamma)|$ is the lattice distance from $a$ to the hyperplane $A_j^\gamma+\ker\gamma$. Then 
the lattice mixed volume of $A_1,\ldots,A_n$ equals 1. In particular, by \cite{mv1}, all $A_i$ are equal to subsets of the set of vertices of the same elementary lattice simplex up to a shift.
\end{lemma}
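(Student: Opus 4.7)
The plan is to deduce, from the equalities $V(A)=h_a\cdot d_\gamma$, enough rigid combinatorial structure on the $A_j$ that, when combined with the ampleness hypothesis of Theorem~\ref{thh1}.2, forces $V(A)=1$; the ``in particular'' assertion then follows from the classification of mixed-volume-one tuples in \cite{mv1}.

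First I would read off the geometry. Since $V(A)$ and $d_\gamma$ are constants, the hypothesis forces every $h_a$ to equal the common value $h:=V(A)/d_\gamma$ for every $a\in A_j\setminus A_j^\gamma$ and every $j$. After translating each $A_j$ so that $B_j:=A_j\setminus A_j^\gamma$ lies in $\ker\gamma$ and $A_j^\gamma$ lies in $\{\gamma=h\}$, every lattice point of $A_j$ sits in one of these two parallel hyperplanes, with none in between. Thus $\mathrm{conv}(A_j)=\mathrm{conv}(A_j^\gamma\cup B_j)$ is a frustum of height $h$.

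Next I would establish the sharp lower bound $V_n(A_1,\ldots,A_n)\geq h\cdot d_\gamma$ via Minkowski's formula for mixed volumes applied in the direction $\gamma$: isolating $A_i$ and summing over facet normals $\nu$ of $\sum_{j\neq i}A_j$, only the contribution from $\nu=\gamma$ is nontrivial after the shift above (because $h_{A_i}(-\gamma)=0$), giving a bound of the form $V_n(A)\geq h\cdot V_{n-1}^H(\{A_j^\gamma\}_{j\neq i})$ inside $H:=\ker\gamma$. Combined with $V_{n-1}^H(\{A_j^\gamma\}_{j\neq i})\geq d_\gamma$ (since this $(n-1)$-dimensional lattice mixed volume in $\bar L_\gamma$ equals $d'_\gamma$ times a positive-integer mixed volume computed inside $L_\gamma$, and $d_\gamma=d'_\gamma\cdot d''_\gamma=d'_\gamma$ when $K_\gamma=\{1,\ldots,n\}$, as in the case $B\in\mathcal{E}_0$), this gives $V_n(A)\geq h\cdot d_\gamma$. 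Our hypothesis saturates the bound, forcing every other facet normal of $\sum_{j\neq i}A_j$ to contribute zero (so that either $h_{A_i}(\nu)=0$ or $V_{n-1}^H$ degenerates) and the top $(n-1)$-dimensional mixed volume to equal $d_\gamma$ exactly. Geometrically this pins down $\mathrm{conv}(A_j)$ to a rigid pyramid-or-prism over $A_j^\gamma$ of height $h$.

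In this restricted setting the essential facet normals of $\mathrm{conv}(A_j)$ are $\pm\gamma$ together with a small set of ``side'' normals connecting $A_j^\gamma$ to $B_j$, and their multiplicities $d_{\gamma'}$ are explicitly controlled by $h$ and the lattice geometry of $A_j^\gamma$. The ampleness hypothesis of Theorem~\ref{thh1}.2 -- that $L$ together with $\{d_{\gamma'}\gamma'\}_{\gamma'\in\mathcal{E}_0}$ generate $H$ -- then becomes a concrete lattice-generation statement compatible with this rigid shape only when $h=1$ and $d_\gamma=1$. Hence $V(A)=h\cdot d_\gamma=1$, and the final sentence of the lemma follows from \cite{mv1}. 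The hard part is paragraph~2: extracting the rigidity of the equality case in Minkowski's formula. The resultant-multiplicity description of $d_\gamma$ in Theorem~\ref{thm:degres} supplies the right $(n-1)$-dimensional invariant for the lower bound, but tracking which side facets and which translates of $B_j$ are permitted by the saturated formula requires careful bookkeeping; once that rigid shape is in hand, the combinatorial lattice check of paragraph~3 is explicit.
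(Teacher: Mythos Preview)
Your proposal diverges from the paper's argument in two places, and the first contains a genuine gap.

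\textbf{The lower bound and its equality case.} The paper does not use Minkowski's facet--sum formula. It uses a one--line monotonicity: replace $A_j$ by the pyramid $\conv(\{a\}\cup A_j^{\gamma})$ and every other $A_i$ by its facet $B_i:=A_i^{\gamma}$, obtaining
\[
V(A)\ \ge\ V\big(\conv(\{a\}\cup B_j),\{B_i\}_{i\ne j}\big)\ =\ h_a\,V_j,\qquad V_j:=V_{n-1}\big(\{B_i\}_{i\ne j}\big).
\]
Since each $V_j$ is a positive multiple of $d_\gamma$, the hypothesis $V(A)=h_a d_\gamma$ forces $V_j=d_\gamma$ for every $j$, i.e.\ $V_j=1$ in the sublattice $L_\gamma$. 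Your Minkowski route, as written, is not correct: the claim that ``only the contribution from $\nu=\gamma$ is nontrivial'' fails, because side--facet normals $\nu$ of $\sum_{k\ne i}A_k$ generally have $h_{A_i}(\nu)\ne 0$ under your shift (you only arranged $h_{A_i}(-\gamma)=0$). Even after shifting so that $0\in A_i$ to make all terms nonnegative, the resulting inequality is weaker than the pyramid bound, and extracting rigidity from the Minkowski equality case would require controlling all side contributions --- exactly the ``careful bookkeeping'' you flag as unfinished. The monotonicity argument avoids all of this.

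\textbf{The endgame.} The paper never invokes the ampleness hypothesis of Theorem~\ref{thh1}.2; it uses only that $A$ is \emph{reduced}. From $V_j=1$ in $L_\gamma$ for all $j$, an application of \cite{mv1} in dimension $n-1$ produces a single elementary simplex $S\subset L_\gamma$ containing every $B_i$ up to shift. A second monotonicity comparison then shows that the height--$h$ points of all the $A_i$ must coincide in one common apex $a_0$, so every $A_i$ lies (up to shift) in the vertex set of $\conv(S\cup\{a_0\})$. The lattice this set generates has index $h\cdot d_\gamma$ in $\Z^n$; reducedness forces $h=d_\gamma=1$, hence $V(A)=1$. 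Your plan to deduce $h=d_\gamma=1$ from ampleness is thus unnecessary, and the sketch you give for it is too vague to evaluate.
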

\begin{proof} By monotonicity of the mixed volume, we have $V(A)\geqslant V \big(a\cup B_j, \left\lbrace B_i \right\rbrace_{i\neq j}\big)= h_a V_{j}$, where $V_{j}$ is the $(n-1)$-dimensional lattice mixed volume of the convex hulls of $B_i=A_i^\gamma,\, i\ne j$. Since all of these convex hulls are $(n-1)$-dimensional (by definition of $\mathcal{E}_0$), the mixed volume $V_j$ is a positive multiple of $d_\gamma$. Thus, for every $j$ and $a$, we have $V_j=d_\gamma$, and $h_a=h$ does not depend on $a$ (and $j$). In particular, in the minimal lattice $L'$ containing every $B_i$ up to a shift, there exists an elementary simplex $S$ containing every $B_i$ up to a shift. Thus, up to a shift, every $A_i$ consists of a subset of the set of vertices of $S$ together with some (possibly none) points at  lattice distance $h$ from $L'$ (on the same side from it). In this case, the equality $V(A)=h_a\cdot d_\gamma=h\cdot\Vol S$ and the monotonicity of the mixed volume imply the existence of a point $a_0$ such that every $A_i$ consists of a subset of the union of the set  vertices of $S$ with $a_0$. This implies that $\Vol S=h=1$, otherwise $A$ would not not be reduced.
\end{proof}
\smallskip
\subsection{Preliminaries from toric geometry.} 
In order to prove Theorem \ref{thh1}.2, we shall need to construct a certain deformation of the system of equations $f_\circ=0$, which is (slightly) degenerate in the sense of the theory of Newton polyhedra.

In order to prove the existence of a sought deformation, we briefly recall how to construct toric compactifications of non-degenerate complete intersections, and how they degenerate.
\begin{defin}
For a system of polynomial equations $f_1=\ldots=f_k=0$, we denote the set of its solutions by $Z(f)$, say that it is a \emph{complete intersection} if its codimension equals $k$, and say that it is \emph{regular} if 0 is a regular value of the polynomial map $f=(f_1,\ldots,f_k)$.
\end{defin}
\begin{rem}
1) Adopting a standard harmless abuse of terminology, we characterize the aforementioned property of the tuple $f$ (not of $Z(f)$ itself) by saying that $Z(f)$ is regular. 

2) In what follows, $f_i$ are Laurent polynomials, and $Z(f)$ belongs to the complex torus $T$.
\end{rem}

We consider the character lattice $\Z^n$ of the complex torus $T=\CC^n$ and its dual lattice $H=H_1(T)$. Given a finite set $B\subset \Z^n$, a polynomial $g\in\C^B$, and a vector $\gamma\in H$, we define the \emph{$\gamma$-leading term} of $g=\sum_{b\in B}c_bx^b$ as $g=\sum_{b\in B^\gamma}c_bx^b$; for generic $g\in\C^B$, this is the highest non-zero homogeneous component of $g$ with respect to the degree $\deg x^b:=\gamma(b)$.

Similarly, given a tuple of finite sets $A=(A_1,\ldots,A_m)$ in $\Z^n$ and a tuple of polynomials $f=(f_1,\ldots,f_m)\in\C^A$, we define $f^\gamma:=(f^\gamma_1,\ldots,f^\gamma_m)$.
For a generic tuple $f\in\C^A$, the system of equations $f^\gamma=0$ defines the regular zero locus $Z(f^\gamma)$.

Let us briefly recall how the sets $Z(f^\gamma)$ (over all $\gamma$) can be used to construct a smooth compactification of $Z(f)$. We  refer to \cite{KKMS} and \cite{khovcomp} for terminology and facts on toric compactifications. 

If a collection of vectors $V\subset H$ can be completed to a lattice basis, then the set of all strictly positive integer combinations of these vectors is called \emph{the simple cone} generated by $V$, and its \emph{faces} are defined as the cones generated by the subcollections of $V$. A cone $C\subset H$ is said to be \emph{compatible} with a set $A\subset\Z^n$, if the support set $A^\gamma$ does not depend on the choice of $\gamma\in C$.
\emph{A simple fan} $\Sigma$ in $H$ is a finite collection of non-intersecting simple cones covering $H$ and closed with respect to taking faces. 

\begin{rem}
Note that, for the sake of brevity, by a simple cone we mean the set of lattice points in a relatively open simple cone, and by a simple fan we mean a complete simple fan.
\end{rem}

Every simple fan $\Sigma$ gives rise to a smooth $n$-dimensional \emph{toric variety} $X_\Sigma$ with an action of the torus $T$. Orbits of this action are in correspondence with cones of $\Sigma$. This correspondence reverses the dimensions and adjacencies and sends the 0-dimensional cone $\{0\}\in\Sigma$ to the dense orbit identified with the torus $T$, so that $T$ can be regarded as a subset of $X_\Sigma$.

\begin{theor}[\cite{KKMS}]\label{thkkms}
For every tuple of finite sets $A=(A_1,\ldots,A_m)$ in $\Z^n$, there exists a simple fan $\Sigma$ compatible with each of them (i.e. such that every cone in $\Sigma$ is compatible with each of the sets $A_i$). Moreover, given a simple cone $C$ compatible with every $A_i$, the fan $\Sigma$ can be chosen to contain $C$. 
\end{theor}
If the simple fan $\Sigma$ is compatible with $A$, then, for a generic tuple of polynomials $f=(f_1,\ldots,f_m)\in\C^A$, the closure of the algebraic set $Z(f)\subset T$ in the toric compactification $X_\Sigma$ is a smooth compact algebraic variety $\overline{Z(f)}$. This smooth compactification splits into its intersections with the toric orbits, and each of these intersections can be explicitly described by polynomial equations. 

Namely, let $O$ be the orbit of $X_\Sigma$, corresponding to a cone $C\in\Sigma$. This orbit is the quotient of the torus $T$ by the subtorus $T_C$, whose homology is the vector span of $C\subset H$. Since the fan is compatible, the tuple $f^\gamma$ does not depend on the choice of $\gamma\in C$, so we denote it by $f^C$.
\begin{theor}[\cite{khovcomp}]\label{thknovkomp} Let $\Sigma$ be a simple fan compatible with a tuple $A$, $f\in \C^A$ be a generic system of equations, and $C$ be any cone in $\Sigma$.
Then the zero locus $Z(f^C)$ is regular and the intersection of the closure $\overline{Z(f)}\subset X_\Sigma$ with the corresponding orbit $O\subset X_\Sigma$ is transversal.
In particular, $\overline{Z(f)}$ is smooth and splits into smooth strata $Z(f^C)/T_C$ over all cones $C\in\Sigma$.
\end{theor}

In what follows, we shall have to deal with generic systems of equations that do not satisfy the assumption of this theorem. The set of all such systems of equations is called the \emph{$A$-bifurcation set}. 
\begin{rem}
In other words, $f$ is in the $A$-bifurcation set if $Z(f^\gamma)$ is not regular for some $\gamma$. In particular, the definition of the $A$-bifurcation set does not depend on the choice of the fan $\Sigma$ compatible with $A$.
\end{rem}
 
For $m=n$, the $A$-bifurcation set is the union of the resultant sets $R_B^{red}\subset\C^A$ over all essential tuples $B\in\mathcal{E}$, as described in Section \ref{sec:res}, and the $A$-discriminant (the closure of all $f\in\C^A$ such that $Z(f)$ is not regular). We now formulate a similar description for arbitrary $m<n$, given in \cite{adv} (in what follows, we shall particularly need the case $n=m+1$).

\begin{defin}
For a tuple $A$ of finite sets $A_1,\ldots,A_m\in\Z^n$, its \emph{Cayley configuration} $\mathcal{A}\subset\Z^n\times\Z^m$ is the set $\cup_{i=1}^m A_i\times\{e_i\},$ where $e_1,\ldots,e_m$ is the standard basis in $\Z_m$.
\end{defin}
The spaces $\C^A$ and $\C^\mathcal{A}$ can be identified by sending a tuple $f=(f_1,\ldots,f_m)\in\C^A$ to the Laurent polynomial $F=\sum_{i=1}^m\lambda_i f_i(x)$ of the variables $\lambda$ and $x$.
\begin{defin}
1) The Cayley discriminant $D^C_A\subset\C^A\simeq\C^\mathcal{A}$ is the \emph{$\mathcal{A}$-discriminant}, i.e. the closure of all $F\in\C^\mathcal{A}$ for which 0 is a critical value.

2) The tuple $A$ is said to be \emph{Cayley-effective}, if its Cayley discriminant has codimension 1, and \emph{Cayley-defective} otherwise.
\end{defin}
\begin{rem}\label{discrres}
1) The latter condition makes sense, because the $\mathcal{A}$-discriminant is a proper irreducible algebraic set in $\C^\mathcal{A}$ (see \cite{GKZ}).

2) Assume that the dimension of the affine span of $A_1+\ldots+A_m$ is smaller than $m$, then the results of Section 1 in \cite{S94} can be rephrased as follows. The tuple $A$ is Cayley-effective if and only if no $k$ sets in the tuple $A$ can be shifted to the same $(k-1)$-dimensional space for $0<k<m$. Under this assumption, the dimension of the affine span of $A_1+\ldots+A_m$ equals $m-1$, and the Cayley discriminant $D^C_A$ is the $A$-resultant $R^{red}_A$. 
\end{rem}
A tuple $B$ is said to be a \emph{facing} of a tuple $A=(A_1,\ldots,A_m)$ of finite sets in $\Z^n$, if there exists a vector $\gamma$ such that $B_i$ is either empty or equal to $A_i^\gamma$ for every $i=1,\ldots,m$. For every facing $B$, the forgetful projection $\C^A\to\C^B$ is defined by sending the tuple $f$ of polynomials $\sum_{a\in A_i}c_{a,i}x^a$ to the tuple $f|_B$ of polynomials $\sum_{a\in B_i}c_{a,i}x^a$. The preimage of the Cayley discriminant $D^C_B$ under this projection will be called and denoted in the same way.
\begin{theor}\label{bifdecomp}
1) For all effective facings $B$ of a tuple $A$, the irreducible hypersurfaces $D^C_B\subset\C^A$ are pairwise distinct.

2) Every codimension 1 irreducible component of the regular $A$-discriminant coincides with the Cayley discriminant $D^C_B$ for some effective subtuple $B$.

3) Every codimension 1 irreducible component of the $A$-bifurcation set coincides with the Cayley discriminant $D^C_B$ for some effective facing $B$.
\end{theor}
\begin{proof}
Part 1 follows from the fact (noticed in \cite{dcg}) that the defining equation of the hypersurface $D^C_B$ non-trivially depends on every coordinate in the space $\C^B$. Part 2 follows from Theorem 2.31 in \cite{dcg} (which moreover explicitly describes all such subtuples). Part 3 follows from Proposition 1.11/4.10 in the arxiv/journal version of \cite{adv} (which moreover explicitly describes all such facings).
\end{proof}

\begin{sledst}\label{corolgener}
Let $A:=(A_1,\cdots,A_m)$ be a tuple of finite sets in $\Z^n$ and $A^\gamma$ be an effective facing. Then, for a generic system of equations $f$ in the Cayley discriminant $D^C_{A^\gamma}\subset\C^A$ and any other facing $B$, the zero locus $Z(f|_B)$ is regular. In particular, in a toric compactifictaion $X_\Sigma\supset T$ whose fan $\Sigma$ is compatible with $A$, the closure of the zero locus $Z(f)$ is smooth outside the orbit whose cone contains $\gamma$, and the closure of the zero locus $Z(\hat{f})$ for any proper subtuple $\hat{f}$ of the tuple $f$ is smooth everywhere.
\end{sledst}
\begin{proof}
The regularity of $Z(f|_B)$ follows from Theorem \ref{bifdecomp}, the smoothness of the closure follows from Theorem \ref{thknovkomp}.
\end{proof}

\smallskip
\subsection{Proof of Theorem \ref{thh1}.2.}
From the start, we assume that $A$ does not satisfy the assumption of Lemma \ref{lexcl}, otherwise $d=1$ by this lemma and \eqref{eq:inductiveconn} is satisfied by assumption. The statement follows then from Theorem \ref{mainth}. 

According to Lemma \ref{lhomog}, it is enough to construct loops $\alpha$ in $\C^A\setminus\{$bifurcation set$\}$ such that the corresponding elements $\tilde\alpha_\sigma\in H^{\oplus d}$ are homogeneous, and the vectors $\sum\tilde\alpha_\sigma$ generate $H/L$. In this section, we shall construct such loops explicitly, starting from the primitive covector $\gamma=\gamma_B$, corresponding to an arbitrary essential tuple $B=A^\gamma\in\mathcal{E}_0$, an arbitrary integer $j\in\{1,\ldots,n\}$ and an arbitrary point $a\in A_j\setminus B_j$.

With no loss of generality, we assume that $0\in B_i$ for every $i=1,\ldots,n$ (otherwise we can shift $A_i$ accordingly). We should now make some consecutive choices to construct the sought loop. 
For a given $j \in \{1,\ldots, n\}$, a given $a \in A_j \setminus B_j$ and given $g=(g_1,\ldots,g_n)\in\C^B$ and $\tilde g=(\tilde g_1,\ldots,\tilde g_n)\in\C^A$, define the tuple 
$$ f_{j,t,g}(x):=x^a+g_{j}(x)+t\cdot \tilde g_j(x) \; \;  \mbox{ and } \; \;  f_{i,t,g}(x):=g_{i}(x)+t\cdot \tilde g_i(x)$$
for any $i\in \{1,\ldots, n\}\setminus \{j\}$.
We will occasionally denote $f_{j,t,g}(x)=:F_{j,g}(x,t)=:F_{j}(x,t)$ when the dependence in the parameters $t$ and $g$ is clear from the context.

From now on and until the end of this section, we choose generic tuples $g=(g_1,\ldots,g_n)$ in $R^{red}_B$ and $\tilde g=(\tilde g_1,\ldots,\tilde g_n)$ in $\C^A$ (which means that subsequently we shall use certain properties of $g$ and $\tilde g$ that are satisfied for all pairs $(g,\tilde g)\in R^{red}_B\times\C^A$ outside a certain proper Zariski closed subset), and
call $f_{1,t_0,g}=\ldots=f_{n,t_0,g}=0$ for small $t_0\ne 0$ the \emph{central system of equations}. 
The name is explained by the following key lemma that, together with Lemma \ref{lhomog}, proves Theorem \ref{thh1}.2.
\begin{lemma}\label{lcentralloop}
Let $G:(\C,0)\to(\C^B,g)$ be a germ of an analytic curve transversal to the resultant $R^{red}_B$ at its smooth point $g$ and choose sufficiently small $|t_0|\ll\varepsilon\ll 1$. As the argument of $G$ travels $-\gamma(a)$ times along the loop $\varepsilon\exp(2\pi i s), s\in[0,1]$, the roots of the corresponding system of equations  
\begin{equation}\label{eq:loop}
    f_{\bullet,\, t_0,\, G(\varepsilon\exp(2\pi i s))}=0
\end{equation}
 permute trivially.
Moreover, $-d_\gamma\cdot\gamma(a)$ of the roots of this system travel a loop in $T$ whose homology class equals $\gamma\in H$ and the other roots travel a contractible loop.
\end{lemma}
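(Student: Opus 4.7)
The plan is to adapt the univariate argument of Lemma \ref{ldim1nonred}.III to the multivariate setting by locating the ``small'' roots in a toric compactification and extracting a local cyclic cover structure at each puncture along the relevant toric divisor. Set $h := -\gamma(a) > 0$. I shall show that for $|t_0| \ll \varepsilon \ll 1$ the $d$ roots of the central system split into $d_\gamma h$ small roots clustered near the toric divisor $D_\gamma$ of a compactification $X_\Sigma$ whose fan contains the ray through $\gamma$, and $d - d_\gamma h$ large roots bounded away from $D_\gamma$. The large roots, being regular solutions of a complete intersection depending holomorphically on $s$ in a neighborhood of the closed disc $\{|s| \le \varepsilon\} \subset \C^B$, trace contractible paths in $T$. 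The small roots organize into $d_\gamma$ clusters of size $h$, one above each $T_\gamma$-orbit of $Z(g)$; each cluster is individually permuted by an $h$-cycle under a unit winding of $s$, and the $d_\gamma$ clusters are fixed setwise.

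To set the stage, choose $X_\Sigma$ by Theorem \ref{thkkms} with $\gamma$ a ray, and denote by $O_\gamma \cong T/T_\gamma$ the dense orbit of $D_\gamma$. Since $\gamma(a) < 0$, the monomial $x^a$ is subleading in $f_j$ with respect to $\gamma$, so the $\gamma$-leading part of $f_{\bullet, t_0, g}$ is simply $g + t_0 \tilde g^\gamma$; for $g \in R^{red}_B$ generic and $\tilde g \in \C^A$ generic, Theorem \ref{thm:degres} identifies the $d_\gamma$ solutions of $g = 0$ in $O_\gamma$ with the $T_\gamma$-orbits of $Z(g) \subset T$ and asserts that the differentials $dg_1, \ldots, dg_n$ at each such solution are of corank one. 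Fix one such orbit $O$ and choose local coordinates $(\lambda, y)$ near $O$ with $\lambda \in \cC$ parametrizing $T_\gamma$ and $y = (y_1, \ldots, y_{n-1})$ transverse, so that $O = \{y = 0\}$. After renormalizing each $g_i$ by the unit $\lambda^{-\gamma(B_i)}$, each $g_i$ depends only on $y$ with $g_i(0) = 0$, and the monomial $x^a$ takes the form $\lambda^{-h} \varphi(y)$ with $\varphi(0) \ne 0$.

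By the corank-one property the $n-1$ differentials $\{dg_i\}_{i \ne j}$ form a basis of a codimension-one subspace of the cotangent space at $O$, so by the implicit function theorem the equations $f_i = 0,\, i \ne j$, define a smooth curve $\Gamma$ parametrized by $\lambda$ and given by $y = Y(\lambda, s, t_0)$ with $Y(\lambda, 0, 0) = 0$. Substituting $y = Y$ into $f_j = 0$ and using the unique linear relation $g_j = \sum_{i \ne j} \alpha_i g_i + O(y^2)$ to cancel the linear-in-$y$ contribution of $g_j$, one obtains, after setting $\mu = 1/\lambda$, a local normal form of the shape
\begin{equation*}
\mu^h \varphi(0) \;=\; \rho(s) + t_0 R(\mu) + (\text{lower order in } s \text{ for } |t_0| \ll \varepsilon^N),
\end{equation*}
where $\rho$ is the pullback under $G$ of a local defining function for $R^{red}_B$ near $g$ and $R$ is a Laurent polynomial in $\mu$. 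Transversality of $G$ to $R^{red}_B$ gives $\rho(s) = c s + O(s^2)$ with $c \ne 0$, and choosing $|t_0|$ sufficiently small relative to a power $\varepsilon^N$ dictated by the pole order of $R$ at $\mu = 0$ makes the $t_0 R$ and higher-order terms negligible; the $h$ small roots above $O$ are thus parametrized by the $h$-th roots of $c s / \varphi(0)$ up to lower-order corrections.

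From the normal form all claims follow. As $s$ winds once around $0$ the $h$ branches above $O$ undergo a single $h$-cycle, while the $d_\gamma$ clusters are fixed setwise since they lie above isolated unperturbed solutions of $g = 0$ in $O_\gamma$. After $h$ windings the permutation is trivial, and each small root traced in $T$ represents a loop of class $\gamma \in H_1(T)$; the $d - d_\gamma h$ large roots, living in a region where the system is a regular complete intersection varying holomorphically with $s$, trace contractible loops. The principal obstacle is establishing the normal form and the nonvanishing of $c$: it amounts to combining the transversality of $G$ with the corank-one statement of Theorem \ref{thm:degres}.2 and using Corollary \ref{corolgener} to rule out any further degeneracy of the central system along the small loop.
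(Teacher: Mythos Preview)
Your outline is sound and reaches the right conclusion, but it follows a genuinely different route from the paper. The paper compactifies the $(n{+}1)$-dimensional torus $T' = T \times \C_t$ (with the deformation parameter $t$ built in), studies the curve $C=\{F_1 = \cdots = F_n = 0\}$ globally in this compactification, and reads off the behaviour of all roots as the limits of $C \cap \{t = t_0\}$ when $t_0 \to 0$: the transversality Lemma~\ref{ltransv} locates $d_\gamma$ points $x_k$ on a codimension-two orbit (yielding the cyclic clusters) and further points $x_{m,k}$ on codimension-one orbits (yielding the contractible roots), after which Lemmas~\ref{lxmk}--\ref{lxk} finish. You instead compactify only $T$, pick explicit local coordinates $(\lambda, y)$ near a single $T_\gamma$-orbit of $Z(g)$, and derive the normal form $\mu^h \varphi(0) \approx \rho(s)$ directly from the corank-one relation of Theorem~\ref{thm:degres}.2 together with the transversality of $G$; this stays closer to the univariate trinomial picture and avoids the extra dimension, at the price of treating the large roots by a separate ad hoc argument rather than uniformly.

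Two points deserve tightening. First, $R(\mu)$ is in fact polynomial in $\mu$: every $b \in A_i$ has $\gamma(b) \le 0$, so $\tilde g_i$ expands in non-negative powers of $\mu=1/\lambda$, and no $\varepsilon^N$ scaling is needed. Second, and more substantively, your one-line disposal of the large roots hides that the family $s \mapsto f_{\bullet,\, t_0,\, G(s)}$ \emph{does} meet the bifurcation set inside the disc $|s| \le \varepsilon$ (namely at the $s$ for which $G(s) + t_0 \tilde g^\gamma \in R^{red}_B$), so ``holomorphic over the closed disc'' is not automatic for all $d$ roots; you must still argue that this crossing only sends the small roots toward $D_\gamma$ and leaves the remaining ones regular. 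That is true for generic $(g,\tilde g)$ and small $(\varepsilon,t_0)$, but it is exactly the content that the paper's compactification of the $t$-direction and the coordinate neighborhoods $U_{m,k}$ make transparent.
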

\begin{rem}
The roots of the family of systems of equations (\ref{eq:loop}) split into ``travelling'' roots and ``still'' ones. If $t_0$ vanishes, the ``still'' roots hide at infinity, but ``travelling'' roots of the resulting family $ f_{\bullet,\, 0,\, G(\varepsilon\exp(2\pi i s))}=0$ keep traveling as described in the lemma. So the only purpose of including monomials with $t$ in the definition of $f_{j,t,g}$ (and setting this $t$ to be non-zero albeit small) is to drive the ``still'' roots back from infinity, while not affecting the itinerary of the travelling roots.
\end{rem}
The rest of this subsection is devoted to the proof of this lemma. This is actually the multidimensional version of step $(III)$ in the proof of Lemma \ref{ldim1nonred}, but the statement and the proof are significantly more technical.

\vspace{1ex}

{\it I: describing the roots of the central system of equations as $t_0 \to 0$.} The polynomials $F_j:=F_{j,g}$ are defined on the complex torus $T'=\CC^n\times\CC^1$ with the standard coordinates $(x,t)$, and their respective Newton polytopes $\Delta_j$ are contained in the character lattice $\Z^n\times\Z^1$ of $T'$, see Figure \ref{fig:compact} for an example.

Note that every strictly positive linear combination $\delta$ of the covectors $(0,-1)$ and $(\gamma,0)\in(\Z^n\times\Z^1)^*$ supports the same face $\Delta_i^\delta$: this face is the convex hull of $B_i\times\{0\}\subset\Z^n\times\Z^1$. Thus, by Theorem \ref{thkkms}, the simple cone $C$ generated by $(0,-1)$ and $(\gamma,0)$ can be extended to a simple fan $\Sigma$ compatible with the polytopes $\Delta_i$.
Let $X$ be the corresponding smooth toric compactification of $T'$. Denote its orbit corresponding to $C$ by $O$, 
and the adjacent codimension 1 orbits corresponding to the vectors $(0,-1)$ and $(\gamma,0)$ by $O_{(0,-1)}$ and $O_{(\gamma,0)}$ respectively, see Figure \ref{fig:compact} for an example.

Denoting by $\tilde O$ the union of $O$, $T'$ and the two aforementioned codimension 1 orbits, we notice that the polynomials $F_1,\ldots,F_n$ and $t$ define regular functions on the open set $\tilde O$, because $0\in B_i$. We denote these regular functions on $\tilde O$ by the same letters.

\begin{lemma}\label{ltransv}
Let $Z_i$ be the closure of the zero locus $Z(F_i)$ in the toric variety $X$. Then, for generic choise of $(g,\tilde g)\in R^{red}_B\times\C^A$, the following transversality conditions take place.

1) The intersection of $Z_1,\ldots,Z_n$ and the orbit $O$ consists of $d_\gamma$ points (see Subsection 4.1 for the resultant multiplicty $d_\gamma$). Near each of these points $x_0$, the hypersurfaces $Z_1,\ldots,Z_n$ are smooth and intersect each other transversally along a smooth curve. This curve is transversal to the closure of $O_{(\gamma,0)}$ and tangent of order $-\gamma(a)$ to the closure of the orbit $O_{(0,-1)}$.

2) If a point $x_0\in X$ belongs to the hypersurfaces $Z_{i_1},\ldots,Z_{i_k}$ and the closures of the codimension 1 orbits $C_1,\ldots,C_m$, then these $k+m$ hypersurfaces are smooth and mutually transversal at $x_0$, except for the setting of part (1), i.e. unless $x_0\in O$, the closure of $O_{(0,-1)}$ is among $C_1,\ldots,C_m$, and $k=n$.
\end{lemma}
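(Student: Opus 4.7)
The plan is to reduce both parts to local calculations in an affine chart of $X$ around each point $x_0$ on a boundary orbit. Part (2) follows directly from Theorem \ref{bifdecomp}, Theorem \ref{thknovkomp}, and Corollary \ref{corolgener}: the bifurcation set of the $A$-enumerative problem is the union of Cayley discriminants of effective facings, and for generic $(g, \tilde g)$ with $g$ a smooth point of $R^{red}_B$ and $\tilde g \in \C^A$ unrestricted, the tuple $f_{\bullet, t_0, g}$ lies on $R^{red}_B = D^C_B$ but on no other Cayley discriminant. Corollary \ref{corolgener} then yields the claimed smoothness and mutual transversality of the $Z_i$ with the codimension-one orbit closures away from the single orbit $O$ corresponding to the facing $B$.

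For part (1), I extend the cone $C$ generated by $(\gamma, 0)$ and $(0, -1)$ to a simple fan $\Sigma$ compatible with the Newton polytopes of all $F_i$ (Theorem \ref{thkkms}) and work in the chart associated with $C$. Assuming $0 \in B_i$ and shifting so that $\gamma|_{B_i} \equiv 0$, local coordinates $(u_1, u_2, y_1, \ldots, y_{n-1})$ with $O_{(\gamma, 0)} = \{u_1 = 0\}$ and $O_{(0, -1)} = \{u_2 = 0\}$ yield
\[
F_i = g_i(y) + u_2 \tilde g_i|_{B_i}(y) + \text{higher order} \qquad (i \ne j),
\]
\[
F_j = g_j(y) + u_2 \tilde g_j|_{B_j}(y) + u_1^{-\gamma(a)} U(y) + \text{higher order},
\]
where $U$ is a nowhere-vanishing unit coming from the monomial $x^a$, whose order of vanishing on $O_{(\gamma, 0)}$ equals $-\gamma(a) \geq 1$. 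A point $(0, 0, y_0)$ lies in $\bigcap_i Z_i$ precisely when $y_0$ solves $g|_O = 0$. Since $B \in \mathcal{E}_0$ forces $K_\gamma = \{1, \ldots, n\}$ with $d''_\gamma = 1$, Theorem \ref{thm:degres}.1 produces exactly $d_\gamma$ such points for generic $g$ in the smooth part of $R^{red}_B$.

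Smoothness and transversality now follow from a Jacobian computation at $x_0 = (0, 0, y_0)$. By Theorem \ref{thm:degres}.2 the $y$-block of the Jacobian has corank one, with a unique relation $\sum_i \lambda_i dg_i(y_0) = 0$ whose coefficients $\lambda_i$ are generically all nonzero. The remaining $du_2$-column is the vector $\big(\tilde g_i|_{B_i}(y_0)\big)_i$, and genericity of $\tilde g$ ensures $\sum_i \lambda_i \tilde g_i|_{B_i}(y_0) \ne 0$, promoting the rank of the $n \times (n+1)$ Jacobian to $n$. In particular every $dF_i$ is nonzero (so each $Z_i$ is smooth at $x_0$) and the $Z_i$'s intersect along a smooth curve near $x_0$. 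Parameterizing this curve by $u_1$, the $n - 1$ equations $F_i = 0$ with $i \ne j$ give a one-dimensional kernel in $(\Delta y, \Delta u_2)$, and applying $\sum_i \lambda_i$ to the full system reduces $F_j = 0$ to
\[
\Big(\sum_i \lambda_i \tilde g_i|_{B_i}(y_0)\Big) \Delta u_2 + \lambda_j U(y_0)\, u_1^{-\gamma(a)} = O(u_1^{-\gamma(a) + 1}).
\]
This yields $\Delta u_2 = c\, u_1^{-\gamma(a)} + O(u_1^{-\gamma(a) + 1})$ with $c \ne 0$, so the curve is parameterized analytically by $u_1$ (transversal to $O_{(\gamma, 0)}$) and is tangent of order exactly $-\gamma(a)$ to $O_{(0, -1)}$.

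The main technical obstacle I anticipate lies in this last balancing argument: one must verify that the leading coefficient of $u_2(u_1)$ is nonzero and appears at the correct order $-\gamma(a)$. This requires simultaneously exploiting the corank-one structure of $(dg_i)$ coming from the resultant geometry (Theorem \ref{thm:degres}.2) and the independent genericity of the perturbation $\tilde g$, and checking that the two generic conditions cut out a non-empty open subset of admissible pairs $(g, \tilde g) \in R^{red}_B \times \C^A$.
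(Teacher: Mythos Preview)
Your argument is correct and aligns with the paper's proof for Part~2 (both invoke Corollary~\ref{corolgener} and the identification $R^{red}_B=D^C_B$) and for the transversality of $Z_1,\ldots,Z_n$ with $\overline{O_{(\gamma,0)}}$ (both use the corank-one relation from Theorem~\ref{thm:degres}.2, fixed by the generic perturbation $\tilde g$).

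The only substantive difference is the computation of the tangency order with $\overline{O_{(0,-1)}}$. You carry out an explicit implicit-function expansion, eliminating the $\Delta y$ leading term via the relation $\sum_i\lambda_i\,dg_i(y_0)=0$ and reading off the exponent $-\gamma(a)$ from the surviving equation. The paper instead uses the symmetry of intersection multiplicity on the smooth surface $S=\bigcap_{i\ne j}Z_i$: the order of $u_2$ along the curve $\bigcap_i Z_i$ equals the order of $F_j$ along the reference curve $S\cap\overline{O_{(0,-1)}}$. Since at $t=0$ one has $F_i=g_i$ for $i\ne j$, this reference curve is a shifted copy of the one-dimensional subtorus $T_\gamma$ on which every $g_i$ is constant; in particular $g_j\equiv g_j(y_0)=0$ there, so $F_j|_{\text{curve}}=x^a$, whose vanishing order at $x_0$ is visibly $-\gamma(a)$. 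This bypasses the bookkeeping of higher-order terms in your expansion (the $O(|\Delta y|^2)$ and $O(u_1u_2)$ contributions), though your control of them is fine once one observes that all lower-order derivatives of $(u_2,y)$ in $u_1$ vanish. Both routes rest on the same inputs; the paper's is shorter, while yours makes the mechanism (corank-one kernel meets generic $\tilde g$-column) more transparent.
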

\begin{rem}\label{remtransv}
As we shall see from the proof,  part 1 is valid for a generic choice of $g\in R^{red}_B$ and $\tilde g_{j'}\in \C^{B_{j'}}$ for some $j'$, even if $\tilde g_i$ are chosen to be 0 for $i\ne j'$.
\end{rem}
\begin{proof}
{\it Part 2} is a special case of Corollary \ref{corolgener}, because the resultant $R^{red}_B$ coincides with the Cayley discriminant $D^C_B$ by Remark \ref{discrres}.2.

{\it Transverality of $Z_1,\ldots,Z_n$ and $O_{(\gamma,0)}$.} By Theorem \ref{thm:degres}.2, the differentials of $g_i$, $i\in \{1,\cdots,n\}$, at $x_0$ satisfy a unique linear relation, thus the differentials of $g_i+t\tilde g_i^\gamma$ are linearly independent for generic choice of $\tilde g_i$. Since $g_i+t\tilde g_i^\gamma$ is the restriction of $F_i$ to the orbit $O_{(\gamma,0)}$, this implies transverality of $Z_1,\ldots,Z_n$ and the closure of $O_{(\gamma,0)}$.

{\it Order of tangency of the sought curve with $O_{(0,-1)}$.} This order equals the order of tangency of the hypersurface $Z_j=\{F_j=0\}$ with the the intersection of the hypersurfaces $Z_i,\, i\ne j$, and the closure of $O_{(0,-1)}$. Part 2 ensures that the latter intersection is transversal and defines a smooth curve. By the definition of its defining equations, this smooth curve is a shifted one-dimensional torus $T_\gamma\subset O_{(0,-1)}\simeq T$, and the restriction of $F_j$ to it equals $x^a$, thus the sought order of tangency equals $-\gamma(a)$.
\end{proof}

Recall that we are interested in the roots of the central system of equations. Identifying the subtorus $t=t_0$ in $T'$ with $T$, the roots of the central system are the intersection points of the curve $F_1=\ldots=F_n=0$ and the subtorus $t=t_0$. 
Consider the closure $C$ of this curve in the toric variety $X$. Lemma \ref{ltransv} ensures that $C$ is smooth.

The coordinate function $t$ on the torus $T'$ extends to a meromorphic function on $X$, and the equation $t=0$ defines a normal crossing divisor supported at the closure of certain codimension 1 orbits $O_m\subset X,\, m=1,\ldots,M$.
By Lemma \ref{ltransv}, the restriction of $t$ to the smooth curve $C$ has the following (and no other) roots: 

-- finitely many (possibly multiple) roots $x_{m,k}$ in the codimension 1 orbits $O_m$;

-- $d_\gamma$ roots $x_k$ of multiplicity $-\gamma(a)$ in the codimension 2 orbit $O$. 

The roots of the central system of equations tend to $x_{m,k}$ and $x_k$ as $t_0$ tends to $0$, so we need a convenient coordinate system around every $x_{m,k}$ and $x_k$ to analyze the roots of the central system for small $t_0$.

\vspace{1ex}

{\it II: describing a good neighborhood of a root $x_{m,k}\in O_m$.}  
By Lemma \ref{ltransv}, every root $x_{m,k}$ admits an open neighborhood $U_{m,k}\subset X$ with an analytic coordinate system $(y_1,\dots,y_n,\tau)$ such that:

-- the coordinate $y_i$ is given by $F_i$ for any $i\in\{1,\dots,n\}$;

-- the coordinate $t$ is given by $\tau^\mu$ for some positive integer $\mu$;

-- $U_{m,k}$ does not intersect orbits of $X$ outside $T'\cup O_m$;

-- the intersection of $U_{m,k}$ with the plane $\tau=\tau_0$ for every sufficiently small $\tau_0$ is a topological disc.

This implies the existence of neighbourhoods $V_{m,k}\subset \C^B$ of $g$ and $W_{m,k}\subset \C$ of $0$ such that, for every $\tau_0,\,\tau_0^\mu\in W_{m,k}$, and every $g'\in V_{m,k}$, the curve $F_{1,g'}=\cdots=F_{n,g'}=0$ transversally intersects the disc $U_{m,k}\cap\{\tau=\tau_0\}$ at one point.

\newpage

\begin{figure}[h]
\begin{center}
\includegraphics[width=14cm]{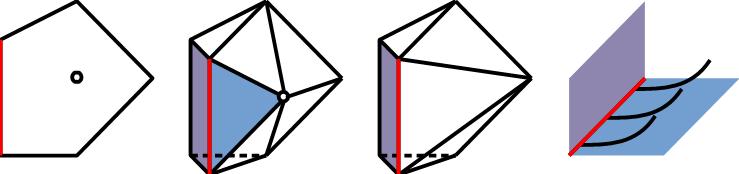}
\end{center}
%\captionsetup{labelformat=empty}
\captionsetup{width=.9\linewidth}
\caption{
\textbf{An example of the toric compactification $\mathbf{X}$ in the case $\mathbf{n=2}$}.
On the left, we represent the convex hull of $A_1=A_2$ with the red edge $B:=B_1=B_2$ and the point $a$. The two figures in the middle represent the polytopes $\Delta_i$ and $\Delta_j,\, j\ne i$, with the red edge $B\times\{0\}$: the blue facet supports the covector $(0,-1)$ and the purple ones the covector $(\gamma,0)$. On the right, we picture the (real part of the) orbits $O$, $O_{(0,-1)}$ and $O_{(\gamma,0)}$ in $X$ corresponding to the faces of the polytopes $\Delta_j$ of the respective colors, and the black curve $F_1=F_2=0$ near $O$.
%1. The convex hull of $A_1=A_2$ with the red edge $B:=B_1=B_2$ and the point $a$;
%\newline 
%2--3. The polytopes $\Delta_i$ and $\Delta_j,\, j\ne i$, with the red edge $B\times\{0\}$: the blue facet supports the covector $(0,-1)$ and the purple ones the covector $(\gamma,0)$;
%\newline 
%4. The (real part of the) orbits $O$, $O_{(0,-1)}$ and $O_{(\gamma,0)}$ corresponding to the faces of the polytopes $\Delta_j$ of the respective colors, and the black curve $F_1=F_2=0$ near $O$.
}
\label{fig:compact}
\end{figure}

\begin{lemma}\label{lxmk}
Assume that, in the setting of Lemma \ref{lcentralloop}, $\varepsilon$ is so small that the loop $G(\varepsilon\exp(2\pi i s)),\, s\in[0,1]$, is contained in $V_{m,k}$, and $t_0\in W_{m,k}$. As the system $f_{\bullet,\, t_0,\, G(\varepsilon\exp(2\pi i s))}=0$ travels along this loop, its roots in $\{t=t_0\}\cap U_{m,k}$ permute trivially, and each of them travels a contractible loop in the torus $\{t=t_0\}\simeq T$.
\end{lemma}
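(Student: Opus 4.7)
The plan is to exploit the coordinate system $(y_1,\ldots,y_n,\tau)$ on $U_{m,k}$ built in part~II, so that the roots in $U_{m,k}\cap\{t=t_0\}$ become the fibres of a locally trivial map over parameter space. First I record that, since $t=\tau^\mu$, the slice $U_{m,k}\cap\{t=t_0\}$ breaks up as the disjoint union
\[
\bigsqcup_{\ell=1}^{\mu} D_\ell,\qquad D_\ell:=U_{m,k}\cap\{\tau=\tau_0^{(\ell)}\},
\]
where $\tau_0^{(1)},\ldots,\tau_0^{(\mu)}$ are the $\mu$-th roots of $t_0$. By the choice of $U_{m,k}$ and the hypothesis $t_0\in W_{m,k}$, each $D_\ell$ is a topological disc; and since $\tau_0^{(\ell)}\ne 0$, each $D_\ell$ avoids the orbit $O_m$, hence lies inside $T'\cap\{t=t_0\}\simeq T$.

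Next, for each $\ell$ I would introduce the map $\Phi_\ell : V_{m,k}\to D_\ell$ sending a parameter $g'$ to the unique solution of $f_{\bullet,t_0,g'}=0$ lying in $D_\ell$. Its existence and uniqueness are precisely the transversality clause at the end of part~II, while continuity (in fact holomorphy) follows from the implicit function theorem applied to the coordinates $(y_1,\ldots,y_n)$ restricted to the sheet $\{\tau=\tau_0^{(\ell)}\}$. Composing $\Phi_\ell$ with the parameter loop $s\mapsto G(\varepsilon\exp(2\pi i s))$, which lies entirely in $V_{m,k}$ by hypothesis, yields a trajectory $x^{(\ell)}(s):=\Phi_\ell(G(\varepsilon\exp(2\pi i s)))$ confined to the single disc $D_\ell$.

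Finally, two observations conclude the argument. Since the parameter loop closes up at $s=1$, each $x^{(\ell)}$ satisfies $x^{(\ell)}(1)=x^{(\ell)}(0)$, so the induced permutation of the $\mu$ roots is the identity. Moreover, each trajectory is a loop inside the contractible disc $D_\ell$, hence null-homotopic in $D_\ell$ and \emph{a fortiori} in the ambient torus $T$. I do not expect any serious obstacle: all the genuine work — the transversal coordinate chart of part~II together with the transversality statement of Lemma~\ref{ltransv} — has already been done upstream, and the present lemma amounts to reading off from it that the roots clustering near the points $x_{m,k}\in O_m$ contribute nothing to either the monodromy or the homology of the root loops.
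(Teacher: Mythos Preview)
Your argument is correct and follows essentially the same approach as the paper: decompose $\{t=t_0\}\cap U_{m,k}$ into $\mu$ disjoint discs via the $\mu$-th roots of $t_0$, note that each disc contains a unique root varying continuously with the parameter, and conclude that the roots neither permute nor carry any homology. The paper's proof is a one-sentence summary of precisely this; you have simply spelled out the details (the map $\Phi_\ell$, the implicit function theorem, the observation that the discs avoid $O_m$ and hence sit in $T$) that the paper leaves implicit.
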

\begin{proof}
By the choice of the neighborhoods $U_{m,k}$, $V_{m,k}$, $W_{m,k}$, the intersection $\{t=t_0\}\cap U_{m,k}$ consists of $\mu$ disjoint discs in $T$, and exactly one of the roots varies in each of these discs. Thus the roots do not permute and travel contractible loops.
\end{proof}

\vspace{1ex}

{\it III: describing a good neighborhood of a root $x_{k}\in O$.}

for every root $x_{k}$ and every choice of $j'\in\{1,\ldots,n\}$,
there exists an open neighborhood $U_{k}\ni x_k$ with an analytic coordinate system such that:

-- $t$ and $F_i,\, i\ne j'$, are $n$ of the $n+1$ coordinate functions;

-- if $\ell$ is the coordinate line defined by them, and $\varphi$ is the remaining coordinate function, then the restriction of $F_{j'}$ to $\ell$ equals $\varphi^{-\gamma(a)}$;

-- $U_{k}$ does not intersect orbits of $X$ outside $\tilde O$;

-- the intersection of $U_{k}$ with the plane $t=t_0$ for every sufficiently small $t_0$ is a topological disc.

This implies the existence of neighbourhoods $V_{k}\subset \C^B$ of $g$ and $W_{k}\subset \C$ of $0$ such that, for every non-zero $t_0\in W_k$ and $g'\in V_k$, the curve $F_{1,g'}=\cdots=F_{n,g'}=0$ transversally intersects the disc $U_k\cap\{t=t_0\}$ at $-\gamma(a)$ points.

\begin{lemma}\label{lxk}
Assume that, in the setting of Lemma \ref{lcentralloop}, $\varepsilon$ is so small that the loop $G(\varepsilon\exp(2\pi i s)),\, s\in[0,1]$, is contained in $V_{k}$, and $|t_0|\ll\varepsilon$. As the system $f_{\bullet,\, t_0,\, G(\varepsilon\exp(2\pi i s))}=0$ travels along this loop, all of its its $-\gamma(a)$ roots in $U_{k}\cap\{t=t_0\}$ permute cyclically, and their paths form a loop in $\{t=t_0\}\simeq T$, whose homology class equals $\gamma\in H$.
\end{lemma}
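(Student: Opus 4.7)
The plan is to work throughout in the local analytic coordinates $(F_1, \ldots, \widehat{F_{j'}}, \ldots, F_n, t, \varphi)$ on $U_k$ provided by the setup and analyze the perturbed equations $F_{i, g_s} = 0$ for $g_s = G(\varepsilon e^{2\pi i s})$. Since $G$ is transversal to $R^{red}_B$ at its smooth point $g$, the perturbation satisfies $g_{s,i} - g_i = \varepsilon e^{2\pi i s} h_i(x) + O(\varepsilon^2)$ for some tangent $h = (h_1, \ldots, h_n)\in \C^B$, and the Lagrange-multiplier description of $T_g R^{red}_B$ --- combined with the unique linear relation $\sum_i \lambda_i\, dg_i|_{x_k} = 0$ of Theorem \ref{thm:degres} part (2) --- ensures $\sum_i \lambda_i h_i(x_k) \neq 0$.

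First I would solve the $n-1$ equations with $i \neq j'$ for the coordinates $F_i$ via the implicit function theorem, obtaining $F_i = -\varepsilon e^{2\pi i s} h_i(x_k) + O(\varepsilon^2) + O(\varepsilon\,|\text{other coords}|)$, and then substitute into the remaining equation $F_{j', g_s} = 0$. Using the expansion $F_{j'} = \varphi^{-\gamma(a)} + t\, P(t, \varphi) + \sum_{i \neq j'} F_i\, Q_i$ (which follows from the identity $F_{j'}|_\ell = \varphi^{-\gamma(a)}$), the system collapses to a single scalar equation
\[
\varphi^{-\gamma(a)} + t_0\, P(t_0, \varphi) + \varepsilon e^{2\pi i s}\, C + o(\varepsilon) = 0,
\]
where a direct computation --- identifying $Q_i(0, 0) = -\lambda_i / \lambda_{j'}$ via the same corank-$1$ relation --- yields $C = \lambda_{j'}^{-1} \sum_i \lambda_i h_i(x_k) \neq 0$.

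In the regime $|t_0| \ll \varepsilon$, the $t_0$-term is dominated by the $\varepsilon$-term, and the leading balance $\varphi^{-\gamma(a)} \sim -C \varepsilon e^{2\pi i s}$ produces the $-\gamma(a)$ roots
\[
\varphi_k(s) = c\cdot \varepsilon^{1/(-\gamma(a))}\, e^{2\pi i s / (-\gamma(a))}\, \zeta^k\, (1 + o(1)), \qquad k = 0, \ldots, -\gamma(a) - 1,
\]
with $\zeta = e^{2\pi i / (-\gamma(a))}$. As $s$ traces $[0, 1]$, each $\varphi_k$ rotates by a factor of $\zeta$, so $\varphi_k(1) = \varphi_{k+1 \bmod (-\gamma(a))}(0)$, establishing the cyclic permutation. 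Concatenating the $-\gamma(a)$ root paths yields a closed loop $\Gamma$ in $\{t = t_0\} \cap U_k \subset T$ along which $\varphi$ winds exactly once around the origin (while the other coordinates $F_i$ return to their starting values on each leg by periodicity of $e^{2\pi i s}$). Since $\varphi$ is, up to an analytic unit on $U_k$, the toric character whose vanishing locus in $X$ is the divisor $\overline{O_{(\gamma, 0)}}$, the loop $\Gamma$ is homotopic in $T$ to a small loop around this toric divisor; by the standard identification of the class of such a loop in $H_1(T') = H \oplus \Z$ with the primitive generator $(\gamma, 0)$ of the corresponding ray, its image in $H_1(T) = H$ is precisely $\gamma$, as required.

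The main technical obstacle is verifying $C \neq 0$: this step rests on unpacking the coordinate expansion of $F_{j'}$ carefully enough to identify the coefficients $Q_i(0, 0)$ through the corank-$1$ relation of Theorem \ref{thm:degres}(2), so that non-vanishing of $C$ coincides exactly with the transversality of the curve $G$ to the resultant $R^{red}_B$ at $g$.
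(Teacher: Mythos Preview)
Your argument is correct and complete in substance, but it follows a genuinely different route from the paper's. The paper does not compute anything in the chart $U_k$. Instead it makes two reductions: first it sets $t_0=0$ (invoking that the claimed permutation and homology class are stable under perturbation of $t_0$), and second it replaces the arbitrary transversal arc $G$ by the specific one $G_{j'}(\epsilon)=g_{j'}+\epsilon\tilde g_{j'}$, $G_i(\epsilon)=g_i$ for $i\neq j'$ (invoking that all small loops around $R^{red}_B$ near a smooth point are homotopic). After these reductions, the parameter $\epsilon$ of $G$ plays exactly the role of $t$ in a new family $f_{\bullet,0,G(t)}(x)=0$, and Lemma~\ref{ltransv}.1 (via Remark~\ref{remtransv}) says directly that the closure of the resulting curve is smooth at $x_k$ with $t$ vanishing to order $-\gamma(a)$; the cyclic permutation and the homology class fall out immediately from this local normal form. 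So the paper recycles Lemma~\ref{ltransv} rather than redoing a local expansion.

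Your approach, by contrast, stays with the original $t_0\neq 0$ and the original $G$, and extracts the leading balance $\varphi^{-\gamma(a)}+\varepsilon e^{2\pi is}C+o(\varepsilon)=0$ by hand. The key step where you identify $Q_i(0,0)=-\lambda_i/\lambda_{j'}$ from the corank-$1$ relation of Theorem~\ref{thm:degres}(2), and hence $C=\lambda_{j'}^{-1}\sum_i\lambda_i h_i(x_k)\neq 0$ from transversality of $G$, is correct: restricting $F_{j'}=\varphi^{-\gamma(a)}+tP+\sum_{i\neq j'}F_iQ_i$ to the orbit $O=\{t=\varphi=0\}$ gives $g_{j'}=\sum_{i\neq j'}g_iQ_i|_O$, whose differential at $x_k$ recovers exactly the unique linear relation. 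What your method buys is an explicit picture of how the roots move, at the cost of more bookkeeping; what the paper's trick buys is brevity, at the cost of the two ``without loss of generality'' steps.

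One small point to tighten: the paper's description of the chart in part~III does not literally assert that $\{\varphi=0\}=\overline{O_{(\gamma,0)}}\cap U_k$; it only pins down $F_{j'}|_\ell=\varphi^{-\gamma(a)}$. Your final paragraph uses this identification to read off the homology class. It is true, but you should say why: since $(F_i)_{i\neq j'}$ restrict to coordinates on the $(n-1)$-dimensional orbit $O$, the divisor $\overline{O_{(\gamma,0)}}$ is locally a graph $\varphi=\Psi(t,F_\bullet)$ with $\Psi(0)=0$, and replacing $\varphi$ by $\varphi-\Psi$ preserves all the listed properties of the chart while making $\{\varphi=0\}=\overline{O_{(\gamma,0)}}\cap U_k$ on the nose. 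With that adjustment your homology computation goes through (up to the same overall sign ambiguity $\pm\gamma$ that is immaterial for the application to the solution lattice).
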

\begin{proof}
First, we assume with no loss of generality that $t_0=0$, because the statement is invariant under perturbations of $t_0$. Second, we assume without loss of generality that $G$ is given by $G_{j'}(\epsilon)=g_{j'}+\epsilon\tilde g_{j'}$ for generic $g_{j'}\in\C^{B_{j'}}$ and $G_i(\epsilon)=g_i$ for $i\ne j'$, because all small loops around the resultant $R_B^{reg}$ near its smooth point $g$ are homotopy equivalent.

Under these additional assumptions, the sought statement turns into the following question: given the curve $C'\subset T'$ defined by the system of equations $f_{\bullet,0,G(t)}(x)=0$, and the point $x_k$ in its closure, what happens to the points of the intersection $C'\cap\{t=t_0\}$ near $x_k$ as $t_0$ runs once around $0$?

The answer is the expected one: the paths of the intersection points form a loop in $\{t=t_0\}\simeq T$, whose homology class equals $\gamma\in H$. This is because, by Lemma \ref{ltransv}.1 (which is applicable to the equations $f_{\bullet,0,G(t)}(x)=0$ according to Remark \ref{remtransv}), the point $x_k$ is a smooth point of the closure of $C'$, and the restriction of $t$ to this closure has a root of multiplicity $-\gamma(a)$ at $x_k$.
\end{proof}

\vspace{1ex}

{\it Proof of Lemma \ref{lcentralloop}.} For $\varepsilon$ and $|t_0|$ small enough, all roots of the  system of equations ${(*)}$ belong to the neighborhoods $U_{m.k}\cap\{t=t_0\}$ and $U_{k}\cap\{t=t_0\}$, because all intersections of the hyperplane $t=t_0$ with the curve 
$F_{1,g'}=\cdots=F_{n,g'}=0$? tend to the points $x_{m,k}$ and $x_k$ as $t_0\to 0$ and $g'\to g$. In these neighborhoods, for $|t_0|\ll\varepsilon$, the roots permute as desired according to Lemmas \ref{lxmk} and \ref{lxk}. \hfill$\square$

\vspace{1ex}

{\it Proof of Theorem \ref{thh1}.2.} 
Given a number $j\in\{1,\ldots,d\}$, a vector $\gamma:=\gamma_B,\, B\in\mathcal{E}_0$, and a point $a\in A_j\setminus B_j$, choose any path $\beta$ from $f_\circ$ to the base point of the loop constructed in Lemma \ref{lcentralloop}. Conjugating the latter loop with $\beta$, we obtain a loop $\alpha_{\beta,\gamma,j,a}$ based at $f_\circ$.

Choosing an ordering $\delta:\{$roots of the base system $f_\circ=0\}\to\{1,\ldots,d\}$, the loop $\alpha_{\beta,\gamma,j,a}$ gives rise to the corresponding vector $\tilde\alpha_{\delta,\beta,\gamma,j,a}$ in the solution lattice $H_\delta\subset H^{\oplus d}$ (recall that its $i$-th component equals the homology class of loop run by the $i$-th root of $f_\circ=0$ as $f_\circ$ travels along $\alpha_{\beta,\gamma,j,a}$). 

According to Lemma \ref{lcentralloop}, all non-zero components of this vector are equal to $\gamma\in H$, and the number of them equals $(\gamma(B_j)-\gamma(a))\cdot d_\gamma$. Moreover, every vector of this form equals $\tilde\alpha_{\delta,\beta,\gamma,j,a}$ for suitable $\beta$, since the monodromy group of the general reduced irreducible system of equations equals $\sym_d$ (see \cite{E18}).

We conclude that, for every $\gamma,j,a$ as above, every vector with $(\gamma(B_j)-\gamma(a))\cdot d_\gamma$ components equal to $\gamma$ and other components equal to 0 is contained in the solution lattice $H_\delta$. 

Note that, for a given $\gamma$, the numbers $\gamma(B_j)-\gamma(a)$ over all $j$ and $a$ are mutually prime (otherwise, if GCD equals $k>1$, then the sets of tuple $A$ can be shifted to the proper sublattice $\gamma^{-1}(k\Z)$, so $A$ is not reduced). Thus the Euclidean algorithm ensures that every vector with $d_\gamma$ components equal to $\gamma$ and other components equal to 0 is contained in the solution lattice $H_\delta$.

By the assumption of Theorem \ref{thh1}, the sums of components of such vectors generate $H/L$, thus by Lemma \ref{lhomog}.II the vectors themselves generate $H^{\oplus d}/L^{\oplus d}$ (the condition (1) of this lemma satisfied by Lemma \ref{lexcl}).\hfill$\quad$

\begin{rem}\label{remincr2} Actually, the construction from the preceding proof is applicable to $B\in\mathcal{E}$ even if $B\notin\mathcal{E}_0$.  Choose an arbitrary tuple $C=(C_1,\ldots,C_n),\,C_i\subset A_i$, generic $\tilde f=(\tilde f_1,\ldots,\tilde f_n)\in\C^{C}$ and a small loop $g_s\in \C^B,\, s\in S^1$, around the resultant $R^{red}_B$. For every linear function $\gamma\in\mathcal{G}_B$, let $h_{\gamma}$ be $\max_i (\max\gamma|_{A_i}-\max\gamma|_{C_i})$. Set
$f_{i,t,s}(x)=g_{i,s}(x)+\tilde f_i(x)+t\cdot \tilde g_i(x)$ and let $s\in S^1$ run a loop for a small $t\ne 0$. Then, as in the preceding proof, the roots of the system $f_{\bullet,t,s}=0$ permute so that (cf. Proposition 5.2 in \cite{mathann}):

1) among the disjoint cycles of the permutation of the roots, we have $d_\gamma$ cycles of length $h_\gamma$ for every $\gamma\in\mathcal{G}_B$;

2) the paths of the roots from one cycle constitute a loop whose class in the homology $H$ equals $\gamma$.

As $s\in S^1$ runs around the circle sufficiently many times (more specifically, $M={\rm LCM}\{h_\gamma\,|\,\gamma\in\mathcal{G}_B\}$ times), we obtain a certain element $\tilde\gamma_B\in H^{\oplus d}$ in the homology of the solution space. According to 1) and 2), this element $\tilde\gamma_B$ has $d_\gamma$ entries equal to $\frac{M}{h_\gamma}\gamma$ for every $\gamma\in\mathcal{G}_B$, and the other entries are equal to 0. As in the preceding proof, we now have four increasing classes, extending Remark \ref{remincr}.2:

a) Tuples $A$, such that the vectors  $d_{\gamma_B}\cdot{\gamma_B}$ over all $B\in\mathcal{E}_0$ generate the lattice $H$;

b) Tuples $A$, such that the vectors $\tilde\gamma_B$ over all $B\in\mathcal{E}$ generate the lattice $H^{\oplus d}$;

c) Tuples with inductively connected solution spaces;

d) Tuples $A$, such that the vectors $\sum_{\gamma\in\mathcal{G}_B} d_\gamma\cdot\gamma$ over all $B\in\mathcal{E}$ generate the lattice $H$.

It is now a purely combinatorial (although highly non-trivial) problem to understand whether the classes (b) and (d) coincide for all reduced irreducible tuples $A$. If the answer is ``yes'' (and this is what we expect at least for $n=2$), then we have (b)$=$(c)$=$(d), so Theorem \ref{thh1}.1 actually provides a criterion of whether the Galois group of a given tuple equals the expected wreath product. If the answer is ``no'', then a more subtle study of solution spaces is required to answer this question.
\end{rem}
\smallskip
\subsection{Proving inductive disconnectedness.}
In order to prove that a given tuple $A$ is not inductively connected, we need the following Poisson-type formula for the product of roots of a system of polynomial equations. It is the special case of the Poisson-Pedersen-Sturmfels-D'Andrea-Sombra formula \cite[Theorem 1.1]{ds}, when one of the $n+1$ polynomials involved is a monomial $x^b$.
\begin{theor} For a generic system of equations $ f=(f_1,\ldots,f_n)\in\C^A$, the product of the values of the monomial $x^b$ over the roots of  $f_1=\ldots=f_n=0$ equals $\prod_{\gamma\in\mathcal{G}}[R_{A^\gamma}(f)]^{\gamma\cdot b}$.
\end{theor}

\begin{proof}[Proof of Theorem \ref{thh1}.1.]
Under the assumptions of Theorem \ref{thh1}.1, we can choose $b\in \Z^n$ and $p>1$ that divides $d_\gamma\cdot(\gamma\cdot b)$ and $l\cdot b$ for all $\gamma\in\mathcal{G}$ and all $l \in L$. 
Recall that the polynomial $R_{A^\gamma}$ is equal to $F^{d_\gamma}$ for some polynomial $F$ on $\C^A$. In this case, the preceding Poisson-type formula implies that the  product of the monomial $x^b$ over the roots of $f$ equals $G^p$ for some polynomial $G$ on $\C^A$. Moreover, the polynomial $G$ does not vanish at systems of equations $f$ that have $d$ distinct roots in the torus since the irreducible factors of $G$ describe systems $f$ with roots at infinity. Thus, for any loop $\alpha$ in the space of systems with $d$ distinct roots, the composition $G^p\circ \alpha$ is a loop in $\C^\star$ whose homology class is divisible by $p$. The latter homology class is given by $(\sum\tilde\alpha_{\sigma})\cdot b$, since $G^p(\alpha(s))$ is the product of the monomials $x^b$ over the roots of the system $\alpha(s)$. We deduce that $(\sum\tilde\alpha_{\sigma})\cdot b$ is divisible by $p$. 
All such elements together with the sublattice $L^{\oplus d}$ generate a proper sublattice in $H^{\oplus d}$, because it is contained in the proper sublattice of all $u\in H^{\oplus d}$ such that $b\cdot\sum u$ is divisible by $p$. Therefore $L^{\oplus d}$ and the solution lattice $H_\delta$ do not generate $H^{\oplus d}$.

\end{proof}

\textbf{Acknowledgement.} The two authors met during the program ``Tropical Geometry, Amoebas and Polytopes'' held at the Institute Mittag-Leffler in spring 2018. The authors would like to express their gratitude to the organizers J. Draisma, A. Jensen, H. Markwig, B. Nill and to the institute for providing inspiring working conditions.

\bigskip
\bigskip
\noindent
A. Esterov\\
National Research University Higher School of Economics\\
Faculty of Mathematics NRU HSE, Usacheva str., 6, Moscow, 119048, Russia\\
\textit{Email}: aesterov@hse.ru\\

\noindent
L. Lang\\
Department of Mathematics, Stockholm University, SE - 106 91 Stockholm, Sweden.\\
\textit{Email}: lang@math.su.se

\end{document}